\newcommand{\heute}{20 August 2010}
\theoremstyle{plain}
\newtheorem{theorem}{Theorem}[section]
\newtheorem{lemma}[theorem]{Lemma}
\newtheorem{corollary}[theorem]{Corollary}
\newtheorem{proposition}[theorem]{Proposition}
\theoremstyle{remark}
\newtheorem{conj}[theorem]{Conjecture}
\newtheorem{remark}[theorem]{Remark}
\newtheorem{remarks}[theorem]{Remarks}
\newtheorem{ques}[theorem]{Question}
\newtheorem*{defn}{Definition}
\newtheorem*{rk}{Remark}
\newtheorem*{bsp}{Example}
\newtheorem*{bspe}{Examples}
\newcommand{\dashTwo}[1]{\textup{(\ref{two}${}'$)}}
\newcommand{\ignore}[1]{}
\newcommand{\zz}{\mathbb{Z}}
\newcommand{\depth}{\operatorname{depth}}
\newcommand{\Ann}{\operatorname{Ann}}
\newcommand{\Ass}{\operatorname{Ass}}
\newcommand{\CMd}{\delta}
\newcommand{\gtD}{\delta_0}
\newcommand{\De}{e}
\newcommand{\Aplus}{A_+}
\DeclareMathSymbol\normal{\mathrel}{AMSa}{"43}
\newcommand{\prank}[1][p]{\text{$#1$-rk}}
\newcommand{\abs}[1]{\left|#1\right|}
\begin{document}

\title[Cohomology of groups of order 128]{The computation
of the cohomology rings of all groups of order 128}
\author[D. J. Green]{David J. Green}
\address{Mathematical Institute \\
University of Jena \\ D-07737 Jena \\
Germany}
\email{david.green@uni-jena.de}
\author[S. A. King]{Simon A. King}
\address{Mathematics Department \\ National University of Ireland \\ Galway \\
Ireland}
\email{simon.king@nuigalway.ie}
\thanks{King was supported by DFG grant GR 1585/4-1 during most of this research.
Both authors received travel assistance from this grant and from GR 1585/4-2\@.}
\subjclass[2000]{Primary 20J06; Secondary 20-04, 20D15}
\date{\heute}

\begin{abstract}
\noindent
We describe the computation of the mod-2 cohomology rings of all
2328 groups of order 128\@. One consequence is that all groups of order
less than 256 satisfy the strong form of Benson's Regularity Conjecture.
\end{abstract}

\maketitle
\newlength{\djglength}

\section{Introduction}
\noindent
Computing large numbers of group cohomology rings allows one to test existing conjectures and
to look for new patterns. Carlson's computations~\cite[Appendix]{CarlsonTownsley} for all 267 groups of order 64
led to the refutation of the Essential Conjecture~\cite[p.~96]{habil}, and inspired further work on
the significance of the local cohomology of group cohomology rings: see pp.~6--8 of~\cite{Benson:MSRI},
and its appendix.

Here we announce the computation of the mod-2 cohomology rings of all 2328 groups of
order 128\@.
The results are available in human-readable form at our
website~\cite{GreenKing:128website}\@. They can be read into the
computer using our package~\cite{SimonsProg}
for the Sage~\cite{Sage} computer algebra system.

We also give some first applications.
Theorem~\ref{thm:main} verifies the
strong form of Benson's Regularity
Conjecture for all groups of order less than 256\@.
We study the statistical effectiveness of Duflot's
lower bound for the depth in Section~\ref{sect:Duflot}\@.
And we investigate the local cohomology of these cohomology rings, observing that
although the $a$-invariants are normally weakly increasing, there are 15
exceptions amongst the 2595 groups of order 64 or 128:
see Table~\ref{table:nonMonotone} on page~\pageref{table:nonMonotone}\@.
\medskip

\noindent
Following J.~F. Carlson~\cite{Carlson:Tests},
we compute mod-$p$ cohomology rings of $p$-groups from
a suitably large initial segment of the minimal projective
resolution, using a completeness criterion to tell when we are done.
For a project of this scale it was necessary to detect completion as soon as possible, as
each extra stage in the minimal resolution is costly.
We derive in Theorem~\ref{thm:BensonVariant} a computationally efficient variant
of Benson's test for completion~\cite[Theorem~10.1]{Benson:DicksonCompCoho}\@.
Benson's test involves finding filter-regular systems of parameters. In
Sections \ref{sect:freg}~and \ref{sect:existence} we develop methods for constructing such
parameters in reasonably low degrees, and demonstrating their existence in lower degrees.
\medskip

\noindent
We implemented our cohomology computations in the open source computer algebra system Sage~\cite{Sage}\@.
King rewrote large parts of
Green's cohomology program~\cite{habil}\@. He developed extensive Cython code that implements the theoretical improvements presented in this paper and makes use of various components of Sage, most notably \textsc{Singular}~\cite{Singular}\@.
Our Sage package is available online~\cite{SimonsProg}\@ and
contains the cohomology ring for each group of order 64. 
The cohomology ring of each group of order 128 is automatically
downloaded from a web repository when it is required. The cohomology
rings and induced homomorphisms can then easily be transformed into
objects of \textsc{Singular}\@.

\section{Constructing filter-regular parameter systems}
\label{sect:freg}
\noindent
We recall the key concepts from Benson's paper~\cite{Benson:DicksonCompCoho}\@.
Let $k$ be a field of characteristic $p>0$. Thorought this paper, let 
$A = \bigoplus_{n\geq 0} A_n$ be a connected noetherian graded commutative $k$-algebra.
That is, $A_0=k$ and $A$ is finitely generated.

Elements of~$A$ will always be understood to be homogeneous. If $x \in
A_n$ then we write $\abs{x} = n$. Recall that the elements of positive
degree form a maximal ideal, which we shall denote by~$\Aplus$.

\begin{defn}
\emph{\cite[\S3]{Benson:DicksonCompCoho}}
\begin{enumerate}
\item
A sequence of elements $h_1,\ldots,h_r \in A_+$ is called
\emph{filter-regular} if for each $1 \leq i \leq r$ the annihilator
of~$h_i$ in the quotient ring $A/(h_1,\ldots,h_{i-1})$ has bounded
degree (\emph{i.e.}, it is a finite dimensional $k$-vector space).
\item
A system of parameters $h_1,\ldots,h_r$ which is filter-regular is said to have
\emph{type} $(d_0,d_1,\ldots,d_r) \in \zz^{r+1}$ if
\begin{xalignat*}{2}
d_0 & \geq -1 \, , & d_{i-1} - 1 \leq d_i & \leq d_{i-1} \; \text{for each $1 \leq i \leq r$,}
\end{xalignat*}
and the annihilator of $h_{i+1}$ in $A/(h_1,\ldots,h_i)$ lies in
degrees${} \leq d_i + \sum_{j=0}^i \abs{h_i}$ for all $0 \leq i \leq
r$, where $h_{r+1} = 0$.
\item
A system of parameters is \emph{strongly quasi-regular} if it is filter-regular
of type $(0,-1,\ldots,-r)$.
If it has type $(-1,-2,\ldots,-r,-r)$ then it is
\emph{very strongly quasi-regular}\@.
.
\end{enumerate}
\end{defn}

\begin{lemma}
\label{lemma:fregBasics}
Let $h_1,\ldots,h_r$ be a sequence of homogeneous elements in~$\Aplus$.
Let $d$ be the Krull dimension of~$A$.
Then the following are equivalent:
\begin{enumerate}
\item
$h_1,\ldots,h_r$ is a filter-regular system of parameters.
\item
$h_1,\ldots,h_r$ is a filter-regular sequence, $r \leq d$,
and $A/(h_1,\ldots,h_r)$ is a finite dimensional $k$-vector space.
\item
$h_1,\ldots,h_r,0$ is a filter-regular sequence and $r \leq d$.
\end{enumerate}
\end{lemma}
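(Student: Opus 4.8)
The plan is to unwind the definition of filter-regularity; the only substantive input is the standard fact that passing to a quotient by one element lowers Krull dimension by at most one. First I would dispose of the equivalence of (b) and (c). By definition, requiring that the sequence $h_1,\dots,h_r,0$ be filter-regular adds to the conditions already imposed on $h_1,\dots,h_r$ only the demand that the annihilator of $0$ in $A/(h_1,\dots,h_r)$ have bounded degree; but that annihilator is all of $A/(h_1,\dots,h_r)$, so the extra demand says precisely that $A/(h_1,\dots,h_r)$ is a finite-dimensional $k$-vector space. Since a prefix of a filter-regular sequence is again filter-regular, the conditions that (c) places on $h_1,\dots,h_r$ alone coincide with those in (b), and both stipulate $r\le d$; hence (b) and (c) are literally the same assertion.

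Next I would prove (a) $\Rightarrow$ (b). A system of parameters of the connected noetherian graded $k$-algebra $A$ has length $d$, so $r=d\le d$; and by definition $A/(h_1,\dots,h_r)$ then has Krull dimension $0$. For a connected noetherian graded $k$-algebra, Krull dimension $0$ is equivalent to finite $k$-dimension, since such a ring is Artinian; so a filter-regular system of parameters automatically meets the remaining conditions of (b).

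The reverse implication (b) $\Rightarrow$ (a) is where the dimension count is used. If $A/(h_1,\dots,h_r)$ has finite $k$-dimension then it has Krull dimension $0$, while building it up from $A$ one relation at a time gives $0=\dim A/(h_1,\dots,h_r)\ge d-r$, so $r\ge d$. The hypothesis $r\le d$ is genuinely needed here --- without it a filter-regular sequence can simply be too long --- and combining the two inequalities forces $r=d$; then finiteness of the quotient is exactly the statement that $h_1,\dots,h_r$ is a system of parameters, which together with filter-regularity is (a). This closes the loop.

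I do not expect a real obstacle; if anything, the step needing most care is (b) $\Rightarrow$ (a), as it is the only one invoking a nontrivial property of Krull dimension. The remainder is bookkeeping: citing the dimension inequality in its graded form, noting that each partial quotient $A/(h_1,\dots,h_i)$ is again connected noetherian graded commutative so that the definitions make sense at every stage, and recording that for such algebras ``Krull dimension $0$'' and ``finite $k$-dimension'' coincide, so that the finiteness conditions appearing in (a), (b) and (c) all say the same thing.
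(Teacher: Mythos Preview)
Your proposal is correct and follows essentially the same decomposition as the paper: the equivalence of (b) and (c) by unwinding what filter-regularity of the terminal $0$ means, then (a)$\Rightarrow$(b) via Krull dimension zero of the quotient, then (b)$\Rightarrow$(a) by forcing $r=d$. The only cosmetic difference is in the last step: the paper argues via finiteness of $A$ over $k[h_1,\ldots,h_r]$ and algebraic independence, whereas you use the dimension-drop inequality $\dim A/(h_1,\ldots,h_r)\ge d-r$; both are standard and equally short.
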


\begin{rk}
If $A$ has bounded degree, then \emph{every} sequence in~$A$ is filter-regular.
\end{rk}

\begin{proof}
Recalling what filter-regularity means for the term $h_{r+1} = 0$, one sees
the equivalence of the last two statements. The first implies the second,
since then $r=d$ and $A/(h_1,\ldots,h_r)$ has Krull dimension zero.
And the second implies the first, for $A$ is a finite module for
$k[h_1,\ldots,h_r]$, and so if $h_1,\ldots,h_r$ were algebraically
dependent then the Krull dimension would have to be less than~$r \leq d$.
\end{proof}

\begin{rk}
By Benson's \cite[Theorem~4.5]{Benson:DicksonCompCoho} all filter-regular parameter systems
for~$A$ have the same type.
By his Corollary~4.8 and Symonds' Theorem~\cite{Symonds:RegProof},
every cohomology ring $H^*(G,k)$ has a strongly quasi-regular parameter system.
\end{rk}

\subsection*{The weak rank-restriction condition}
Benson's test requires an explicit
filter-regular system of parameters in~$H^*(G,k)$.
One approach is to find
classes whose restrictions to each elementary abelian subgroup are
(powers of) the Dickson invariants~\cite[Corollary~9.8]{Benson:DicksonCompCoho}\@.
This is straightforward, but the degrees involved
can be inconveniently large.
Restricting to the case of a $p$-group, we shall see in
Lemma~\ref{lemma:filterConstruct} that one can use
the Dickson invariants for a complement of the centre instead: these lie in lower degree.

\begin{defn}
(c.f.\@ \cite[\S8]{Benson:DicksonCompCoho}) \quad
Let $G$ be a $p$-group with $\prank(G)=K$. Set $C$ to be
$\Omega_1(Z(G))$, the greatest central elementary abelian subgroup of~$G$\@.
Homogeneous elements $\zeta_1,\ldots,\zeta_K
\in H^*(G)$ satisfy the \emph{weak rank restriction condition} if
for each elementary abelian subgroup $V \geq C$ of $G$ the
following holds, where $s = \prank(V)$:
\begin{quote}
\noindent
The restrictions of $\zeta_1,\ldots,\zeta_s$ to~$V$ form a
homogeneous system of parameters for $H^*(V)$; and
the restrictions of $\zeta_{s+1},\ldots,\zeta_K$ are zero.
\end{quote}
\end{defn}

\begin{lemma}
If $\zeta_1,\ldots,\zeta_K \in H^*(G)$ satisfy the weak rank restriction
condition then they constitute a filter-regular system of parameters.
\end{lemma}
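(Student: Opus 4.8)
\medskip
\noindent\textbf{Proof proposal.}
The plan is to deduce the lemma from Lemma~\ref{lemma:fregBasics}: it is enough to show that $\zeta_1,\dots,\zeta_K$ is a filter-regular \emph{sequence}, that $K\le d:=\dim H^*(G)$, and that $H^*(G)/(\zeta_1,\dots,\zeta_K)$ is finite dimensional. Quillen's theorem gives $d=\prank(G)=K$, so only the first and third points need work. For the third I would compute the cohomology variety of the ideal $(\zeta_1,\dots,\zeta_K)$ by Quillen stratification, as the union, over conjugacy classes of elementary abelian subgroups $V\le G$, of the images under $\operatorname{res}_V^{*}$ of the common zero loci of $\zeta_1|_V,\dots,\zeta_K|_V$ in $V_V$, the maximal ideal spectrum of $H^*(V)$. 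Since restriction is transitive, only \emph{maximal} $V$ contribute; and a maximal elementary abelian subgroup $V$ contains $C$, because $VC$ is again elementary abelian. So the weak rank restriction condition applies with $s=\prank(V)$ and makes $\zeta_1|_V,\dots,\zeta_s|_V$ a homogeneous system of parameters for $H^*(V)$, so the common zero locus of all of $\zeta_1|_V,\dots,\zeta_K|_V$ is the origin. Hence $(\zeta_1,\dots,\zeta_K)$ is primary to the maximal ideal $\mathfrak m=H^{>0}(G)$, and $H^*(G)/(\zeta_1,\dots,\zeta_K)$ is finite dimensional.

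For the filter-regular \emph{sequence} property I must check, for each $i$, that $\zeta_i$ lies in no associated prime $\frakp\ne\mathfrak m$ of $A_{i-1}:=H^*(G)/(\zeta_1,\dots,\zeta_{i-1})$. I argue by induction on $i$, so that $\zeta_1,\dots,\zeta_{i-1}$ is already filter-regular and hence an $H^*(G)_\frakp$-regular sequence for every homogeneous prime $\frakp\ne\mathfrak m$ containing it. Suppose $\zeta_i\in\frakp$ for some $\frakp\in\Ass(A_{i-1})$, $\frakp\ne\mathfrak m$; then $\frakp\supseteq(\zeta_1,\dots,\zeta_i)$ and $V(\frakp)$ has positive dimension. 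By Quillen stratification $V(\frakp)$, being irreducible, lies in $\operatorname{res}_V^{*}(V_V)$ for a single elementary abelian $V$; enlarging $V$ to $VC$ only enlarges this image, so we may take $V\supseteq C$, with $s=\prank(V)$. As $\operatorname{res}_V^{*}$ is finite, $V(\frakp)=\operatorname{res}_V^{*}(W)$ for some irreducible $W$ of the same dimension in $V_V$, and every point of $W$ is a common zero of $\zeta_1|_V,\dots,\zeta_i|_V$. If $i>s$ then, by weak rank restriction, $\zeta_1|_V,\dots,\zeta_s|_V$ already generate an $H^{>0}(V)$-primary ideal, so $W$, hence $V(\frakp)$, is a single point, contradicting $\frakp\ne\mathfrak m$. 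Thus $i\le s$ and, since $H^*(V)$ is Cohen--Macaulay, $\zeta_1|_V,\dots,\zeta_i|_V$ is part of a homogeneous system of parameters, hence a regular sequence, on $H^*(V)$; so $W$ has dimension at most $s-i$.

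If $\frakp$ is a \emph{minimal} prime over $(\zeta_1,\dots,\zeta_{i-1})$ I get a contradiction from dimensions: the zero locus of $\zeta_1|_V,\dots,\zeta_{i-1}|_V$ in $V_V$ is pure of dimension $s-i+1$, so a component $W'$ of it containing $W$ strictly contains $W$, whence $V(\frakp)=\operatorname{res}_V^{*}(W)$ is strictly contained in the irreducible set $\operatorname{res}_V^{*}(W')$, which lies in the variety cut out by $\zeta_1,\dots,\zeta_{i-1}$; this contradicts the maximality of $V(\frakp)$ among irreducible subsets of that variety. The remaining case, $\frakp$ an \emph{embedded} prime of $A_{i-1}$, is the hard one. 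Here the inductive hypothesis forces $\depth H^*(G)_\frakp=i-1$ while $\operatorname{ht}\frakp\ge i$, so $H^*(G)_\frakp$ is not Cohen--Macaulay; I would rule this out by a local Duflot-type estimate using the centrality of $C$. Since $V\supseteq C$ is elementary abelian and $C\le Z(G)$, $V$ is a central elementary abelian subgroup of full rank $s\ge i$ in the centralizer $C_G(V)$, so $\depth H^*(C_G(V))\ge s$ by Duflot's theorem; transporting this along Quillen's refined stratification — which describes $\operatorname{Spec}H^*(G)$ near the $V$-stratum through $H^*(C_G(V))$ and the Weyl group $N_G(V)/C_G(V)$ — should give $\depth H^*(G)_\frakp\ge s-\dim V(\frakp)\ge s-(s-i)=i$, the desired contradiction. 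I expect this marriage of Quillen stratification with Duflot's theorem, and the attendant bookkeeping of strata, centralizers and Weyl groups, to be the main obstacle. (For $i\le\prank(C)$ nothing is subtle: then $\zeta_1,\dots,\zeta_i$ restrict to a homogeneous system of parameters of $H^*(C)$, and Duflot's theorem applied directly to $G$ shows they form a regular sequence on $H^*(G)$. If Benson's \S8 already contains the assertion that his rank-restriction hypothesis forces filter-regularity of the resulting system of parameters, this entire second half reduces to citing it, together with the observation above that maximal elementary abelian subgroups contain $C$.)
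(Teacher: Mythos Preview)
Your proof of the system-of-parameters claim is correct and matches the paper. The difference lies in how filter-regularity is established.

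The paper does not work prime by prime. Instead it observes that Benson's proof of \cite[Theorem~9.6]{Benson:DicksonCompCoho} goes through verbatim for the weak rank restriction condition, once one makes a single group-theoretic observation: for \emph{any} elementary abelian $E\le G$ of rank~$i$, set $V=\langle C,E\rangle$; then $V\ge C$, $\prank(V)\ge i$, and crucially $C_G(V)=C_G(E)$ because $C$ is central. So the Duflot-type regular-sequence argument in Benson's proof, which needs the restrictions of $\zeta_1,\dots,\zeta_i$ to $H^*(C_G(E))$ to be a regular sequence, can be run using the weak hypothesis on $V$ rather than a strong hypothesis on $E$. Your parenthetical at the end almost lands here, but ``maximal elementary abelians contain $C$'' is not the point; the point is this centraliser equality, which lets one trade an arbitrary $E$ for some $V\supseteq C$ without changing the centraliser.

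Your own route via associated primes is not wrong in spirit, but it is longer and, as you acknowledge, incomplete. The embedded-prime step is the gap: the assertion that Quillen stratification transports Duflot's bound $\depth H^*(C_G(V))\ge s$ to the local inequality $\depth H^*(G)_\frakp \ge s-\dim V(\frakp)$ is exactly the content you would still have to prove, and doing so amounts to reproving the relevant part of Benson's Theorem~9.6. So rather than rebuild that machinery inside an associated-prime argument, it is cleaner to cite Benson directly with the centraliser observation above.
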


\begin{proof}
By a well known theorem of Quillen (see e.g.\@ \cite[\S5.6]{Benson:II}),
$\zeta_1,\ldots,\zeta_K$ is a system of parameters for $H^*(G)$.
The proof of~\cite[Theorem~9.6]{Benson:DicksonCompCoho} applies
equally well to parameters satisfying the weak rank restriction condition:
if $E \leq G$ is elementary abelian of rank~$i$,
then for $V = \langle C, E\rangle$ one has $V \geq C$ and
$C_G(V)=C_G(E)$. And since $\prank(V) \geq i$,
the restrictions of $\zeta_1,\ldots,\zeta_i$ to $H^*(C_G(E))$ form a
regular sequence, by the same Duflot-type argument.
\end{proof}

\begin{lemma}
\label{lemma:filterConstruct}
Let $G$ be a $p$-group. Set with $K = \prank(G)$, $C = \Omega_1(Z(G))$ and $r = \prank(C)$.
Then there exist
$\zeta_1,\ldots,\zeta_K \in H^*(G)$ satisfy the following conditions:
\begin{enumerate}
\item
The restrictions of $\zeta_1,\ldots,\zeta_r$
form a regular sequence in $H^*(C)$.
\item
For each rank $r+s$ elementary abelian subgroup $V \geq C$ of $G$ the
restrictions of $\zeta_{r+s+1},\ldots,\zeta_K$ to $V$ are zero,
and for $1 \leq i \leq s$ the restrictions of $\zeta_{r+i}$ to~$V$ is
a power of the $i$th Dickson invariant in $H^*(V/C)$.
\end{enumerate}
Any such system $\zeta_1,\ldots,\zeta_K$ is a filter-regular system of
parameters for $H^*(G)$.
\end{lemma}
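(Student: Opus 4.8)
The proof splits into two parts: that every system satisfying (1) and (2) is a filter-regular system of parameters, and the construction of one such system. For the first part the plan is to verify the weak rank restriction condition and then quote the preceding lemma. So fix an elementary abelian subgroup $V \geq C$, say of rank $r+s$; then $V/C$ is elementary abelian of rank $s$ and $\prank(V) = r+s$. Choosing a complement to $C$ inside $V$ identifies $H^*(V)$ with $H^*(C) \otimes H^*(V/C)$, compatibly with restriction to $C$ (which becomes the projection killing $H^{>0}(V/C)$) and with inflation from $V/C$ (which becomes $b \mapsto 1 \otimes b$). By (2) the classes $\zeta_{r+s+1},\ldots,\zeta_K$ restrict to $0$ on $V$, so it remains only to see that $\zeta_1|_V,\ldots,\zeta_{r+s}|_V$ is a homogeneous system of parameters for $H^*(V)$, which has Krull dimension $r+s$.

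To do this, note that by (2) the elements $\zeta_{r+1}|_V,\ldots,\zeta_{r+s}|_V$ are powers of the Dickson invariants of $H^*(V/C)$, which already form a homogeneous system of parameters there; so the ideal they generate in $H^*(V) \cong H^*(C)\otimes H^*(V/C)$ has its radical containing $H^*(C) \otimes H^{>0}(V/C)$, since $H^{>0}(V/C)$ is nilpotent modulo the Dickson invariants. Modulo that radical $H^*(V)$ becomes a quotient of $H^*(C)$ in which $\zeta_j|_V$ reduces to the image of $\zeta_j|_C$ for $1 \leq j \leq r$; and by (1), together with $r = \dim H^*(C)$, these restrictions (a regular sequence of length $r$) cut $H^*(C)$ down to a finite-dimensional ring. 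Hence $H^*(V)/(\zeta_1|_V,\ldots,\zeta_{r+s}|_V)$ has Krull dimension zero, as required, and the preceding lemma finishes the first part.

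For the construction I would produce $\zeta_1,\ldots,\zeta_r$ and $\zeta_{r+1},\ldots,\zeta_K$ separately. For the first block, the Evens--Venkov finiteness theorem makes $H^*(C)$ a finitely generated module over $H^*(G)$ via restriction, so the image $B$ of $H^*(G) \to H^*(C)$ is a finitely generated graded subalgebra with $H^*(C)$ module-finite over it; in particular $\dim B = r$. A homogeneous system of parameters $\eta_1,\ldots,\eta_r$ for $B$ is then one for $H^*(C)$ as well, and since $H^*(C)$ is Cohen--Macaulay --- polynomial for $p=2$, finite and free over a polynomial ring in general --- it is automatically a regular sequence there; lifting the $\eta_i$ to $H^*(G)$ gives (1). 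For the second block I would pass to the $p$-group $G/C$. First one checks $\prank(G/C) \geq K-r$: if $E \leq G$ is elementary abelian of rank $K = \prank(G)$, then $CE$ has exponent $p$ (as $C$ is central), hence is elementary abelian, of rank $K$, and contains $C$, so $CE/C$ is elementary abelian of rank $K-r$ in $G/C$. Thus Benson's construction~\cite[Corollary~9.8]{Benson:DicksonCompCoho} applied to $G/C$ yields classes $w_1,\ldots,w_m \in H^*(G/C)$ with $m = \prank(G/C) \geq K-r$, restricting on each elementary abelian subgroup of $G/C$ to the appropriate Dickson power below its rank and to $0$ above. Set $\zeta_{r+i} = \Inf_{G/C}^{G}(w_i)$ for $1 \leq i \leq K-r$. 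For $V \geq C$ elementary abelian, the composite $H^*(G/C) \to H^*(G) \to H^*(V)$ of inflation with restriction factors through $H^*(V/C)$, because $V \hookrightarrow G \to G/C$ equals $V \to V/C \hookrightarrow G/C$; so $\zeta_{r+i}|_V = \Inf_{V/C}^{V}(w_i|_{V/C})$, and since $\prank(V/C) = s$ this is a power of the $i$th Dickson invariant of $H^*(V/C)$ for $i \leq s$ and $0$ for $i > s$ --- which is exactly (2).

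The main obstacle is the first block of parameters: one needs classes in $H^*(G)$, not merely in $H^*(C)$, whose restrictions to $C$ form a regular sequence, while $H^*(G) \to H^*(C)$ is typically far from surjective. What makes it work is that Evens--Venkov finiteness forces the image of the restriction map to have full Krull dimension $r$, so it contains a homogeneous system of parameters for $H^*(C)$, and that Cohen--Macaulayness of $H^*(C)$ turns that into a regular sequence. The radical bookkeeping inside $H^*(C) \otimes H^*(V/C)$ in the first part, by contrast, is routine once the Künneth decomposition is in hand.
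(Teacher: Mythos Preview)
Your argument is correct. The verification that any system satisfying (1)~and~(2) meets the weak rank restriction condition, and the construction of $\zeta_1,\ldots,\zeta_r$ via Evens--Venkov plus Cohen--Macaulayness of $H^*(C)$, match the paper's reasoning (and indeed spell out details the paper leaves implicit).

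Where you genuinely diverge is in the construction of $\zeta_{r+1},\ldots,\zeta_K$. The paper works directly: it prescribes the desired restrictions to each elementary abelian $V\geq C$, checks compatibility, and then invokes Quillen's $F$-isomorphism theorem to realise these restrictions (after $p$-th powers) as actual classes in $H^*(G)$. You instead pass to the quotient $G/C$, apply Benson's existing Dickson-invariant construction \cite[Corollary~9.8]{Benson:DicksonCompCoho} there, and inflate the first $K-r$ of the resulting classes back to $G$; the factorisation $H^*(G/C)\to H^*(G)\to H^*(V)$ through $H^*(V/C)$ then delivers condition~(2) immediately. Your route is cleaner in that it packages the Quillen lifting inside a result already on the shelf, and the check that $\prank(G/C)\geq K-r$ is a pleasant bonus observation. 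The paper's direct approach, on the other hand, makes the role of the compatibility conditions across different $V$'s more visible and does not require tracking the possibly larger $p$-rank of $G/C$. Both arrive at the same place with comparable effort.
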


\begin{rk}
By the $i$th Dickson invariant we mean the one which restricts nontrivially
to dimension~$i$ subspaces, but trivially to smaller subspaces.
That is, if $i < j$ then the $i$th Dickson invariant lies in lower degree
than the $j$th Dickson invariant.
\end{rk}

\begin{proof}
Evens showed \cite[p.~128]{CarlsonTownsley} that $H^*(C)$ is a finitely
generated module over the image of restriction, so
$\zeta_1,\ldots,\zeta_r$ exist.
The~$\zeta_{r+i}$ are specified by their
restrictions to elementary abelian subgroups. These restrictions
satisfy the compatibility conditions for genuine
restrictions. Thus, on raising these defining
restrictions by sufficiently high $p$th powers, the $\zeta_{r+i}$
do indeed exist, by a result of Quillen (\cite[Coroll 5.6.4]{Benson:II}).
Last part: The weak rank restriction condition is satisfied.
\end{proof}

\begin{rk}
Lemma~\ref{lemma:filterConstruct} is a recipe for
constructing filter-regular parameters.
By Kuhn's work~\cite{Kuhn:Cess},
$\zeta_1,\ldots,\zeta_r$ may be found
amongst the generators of~$H^*(G)$. 

Moreover one can replace $\zeta_K$ by
any element that completes a system of parameters, as every parameter is filter-regular
in the one-dimensional case.
\end{rk}

\begin{bsp}
The Sylow $2$-subgroup of $\mathit{Co}_3$ has order $1024$.
The Dickson invariants lie in degrees 8, 12, 14 and 15\@, so they can only be used
for Benson's test in degree $46$\@.
Lemma~\ref{lemma:filterConstruct} yields filter regular parameters in degrees 8, 4, 6 and 7,
and the last parameter can be replaced by one in degree~$1$. 
This allows one to use Benson's test in degree $17$\@. In the next section we will
improve this to~$14$\@.
\end{bsp}

\subsection*{New filter-regular seqences from old}
Finding filter-regular parameters in low degrees makes the computations easier.
The two following methods can be used to lower the degrees once filter-regular parameters
have been found. They are factorization (Lemma~\ref{lemma:factor}) and
nilpotent alteration (Lemma~\ref{lemma:fregNilp})\@.

\begin{lemma}
\label{lemma:fakAux}
Suppose that $p_1,\ldots,p_r \in A_+$ are
filter-regular. Suppose that $p_1 x \in (p_2,\ldots,p_r)$ and that
$x$~lies in sufficiently high degree. Then $x \in (p_2,\ldots,p_r)$.
\end{lemma}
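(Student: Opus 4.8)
The plan is to reduce the statement to the following fact: if $p_1,\ldots,p_r$ is filter-regular, then the annihilator of $p_1$ in the quotient ring $B := A/(p_2,\ldots,p_r)$ has bounded degree. Granting this, choose $N$ so that $\Ann_B(p_1)$ vanishes in degrees above $N$. If $\abs{x} > N$ and $p_1 x \in (p_2,\ldots,p_r)$, then the image of $x$ in $B$ lies in $\Ann_B(p_1)$ and, being of degree exceeding $N$, must be zero; that is, $x \in (p_2,\ldots,p_r)$. So the whole content is that filter-regularity of the \emph{sequence} $p_1,\ldots,p_r$ forces a degree bound on $\Ann_B(p_1)$, even though $p_1$ is the first element of the sequence rather than being taken modulo the others.

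I would establish this fact by induction on $r$, peeling off $p_r$. For $r=1$ the ideal $(p_2,\ldots,p_r)$ is zero and the assertion is exactly that $\Ann_A(p_1)$ has bounded degree, which is the filter-regularity hypothesis on $p_1$. For $r \geq 2$, write a relation $p_1 x = \sum_{j=2}^{r} a_j p_j$ with each $a_j$ homogeneous, necessarily of degree $\abs{p_1} + \abs{x} - \abs{p_j}$. Reducing modulo $(p_1,\ldots,p_{r-1})$ annihilates every term except $a_r p_r$, so the image of $a_r$ lies in the annihilator of $p_r$ in $A/(p_1,\ldots,p_{r-1})$, which has bounded degree because $p_1,\ldots,p_r$ is filter-regular. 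Since $\abs{a_r}$ grows with $\abs{x}$, once $x$ is in high enough degree this forces $a_r \in (p_1,\ldots,p_{r-1})$, say $a_r = \sum_{i=1}^{r-1} c_i p_i$ with the $c_i$ homogeneous. Substituting back and collecting terms gives $p_1(x - c_1 p_r) \in (p_2,\ldots,p_{r-1})$, where $x - c_1 p_r$ is homogeneous of the same degree as $x$. Since $p_1,\ldots,p_{r-1}$ is an initial segment of a filter-regular sequence, hence itself filter-regular, the inductive hypothesis gives $x - c_1 p_r \in (p_2,\ldots,p_{r-1})$ and therefore $x \in (p_2,\ldots,p_r)$. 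Following the thresholds through the recursion shows that ``sufficiently high degree'' can be taken to be an explicit maximum of finitely many bounds depending only on the $\abs{p_i}$ and the relevant annihilators.

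The delicate point --- and the only place where the hypothesis is genuinely consumed --- is the direction of the peeling: one removes $p_r$ rather than $p_1$, precisely so as to land in a quotient $A/(p_1,\ldots,p_{r-1})$ in which filter-regularity directly controls the remaining element $p_r$. This is the graded counterpart of the classical fact that a regular sequence of elements lying in a maximal ideal may be permuted, and indeed a more conceptual proof would use exactly that: filter-regularity means $p_1,\ldots,p_r$ localizes to a (possibly improper) regular sequence at every homogeneous prime $\frakp \neq \Aplus$; at such a $\frakp$ either $(p_2,\ldots,p_r)$ becomes the unit ideal, so $B_\frakp = 0$, or $p_1/1$ is a unit, or all the $p_i$ lie in $\frakp A_\frakp$ and the permutation theorem makes $p_1$ a nonzerodivisor on $B_\frakp$. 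In every case $(\Ann_B(p_1))_\frakp = 0$, so $\Ann_B(p_1)$ is $\Aplus$-torsion, hence finite-dimensional since $A$ is noetherian. I would present the elementary induction and record the localization argument as a remark.
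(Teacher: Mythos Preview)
Your argument is correct and follows essentially the same route as the paper's own proof: induction on $r$, using filter-regularity of $p_r$ modulo $(p_1,\ldots,p_{r-1})$ to force the coefficient of $p_r$ into $(p_1,\ldots,p_{r-1})$, then absorbing the $p_1$-multiple and invoking the inductive hypothesis on the shorter sequence. Your write-up is simply more explicit than the paper's two-line version, and the localization remark is a pleasant bonus but not needed.
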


\begin{proof}
This is clear for $r=1$. We proceed by induction on~$r$.  So suppose that
$r \geq 2$ and
$p_1 x = \sum_{i=2}^r p_i w_i$.

Since $w_r$ is in sufficiently large degree, there are
$v_1,\ldots,v_{r-1} \in A$ with
$w_r = \sum_{i=1}^{r-1} p_i v_i$. Then
$p_1(x - p_r v_1) \in (p_2,\ldots,p_{r-1})$, and we are done by induction.
\end{proof}

\begin{lemma}
\label{lemma:factor}
Suppose that $p_1,\ldots,p_r \in A_+$ is a filter-regular sequence, and
that $f_1,\ldots,f_r \in A_+$ satisfy $f_i \mid p_i$ for each~$i$.
Then $f_1,\ldots,f_r$ is filter-regular too.
Hence if $p_1,\ldots,p_r$ is a filter-regular system of parameters, then so is
$f_1,\ldots,f_r$.
\end{lemma}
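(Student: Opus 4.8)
The plan is to replace the $p_i$ by the $f_i$ one term at a time, from the left, proving by induction on $k$ that $f_1,\ldots,f_k,p_{k+1},\ldots,p_r$ is filter-regular; the case $k=0$ is the hypothesis and $k=r$ the assertion. Write $p_k=f_kg_k$, and assume $g_k\in\Aplus$ (if $g_k$ is a unit, replacing $p_k$ by $f_k$ merely rescales one term and changes nothing). In the step from $Q^{(k-1)}:=(f_1,\ldots,f_{k-1},p_k,p_{k+1},\ldots,p_r)$ to $Q^{(k)}:=(f_1,\ldots,f_{k-1},f_k,p_{k+1},\ldots,p_r)$ the filter-regularity conditions in positions $<k$ are literally unchanged, and the one in position $k$---that $\Ann_{A/(f_1,\ldots,f_{k-1})}(f_k)$ have bounded degree---follows from the position-$k$ condition of $Q^{(k-1)}$ because $\Ann(f_k)\subseteq\Ann(f_kg_k)=\Ann(p_k)$ (trivially, as $f_k\mid p_k$).

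The content lies in the conditions for positions $j$ with $k<j\le r$. Pass to $\bar A:=A/(f_1,\ldots,f_{k-1})$, in which $p_k,p_{k+1},\ldots,p_r$ is a filter-regular sequence (it is the tail of $Q^{(k-1)}$), put $C:=\bar A/(p_{k+1},\ldots,p_{j-1})$, and let $y$ denote the image of $p_j$ in $C$. Since $p_k=f_kg_k\in(f_k)$, the position-$j$ condition for $Q^{(k)}$ amounts to ``$\Ann_{C/(f_k)}(y)$ has bounded degree'' and that for $Q^{(k-1)}$ to ``$\Ann_{C/(f_kg_k)}(y)$ has bounded degree''. I would deduce the former from the latter using the following elementary fact about any ring of the type considered here: \emph{if $f,g,y\in C_+$ with $\Ann_C(g)$ and $\Ann_{C/(fg)}(y)$ both of bounded degree, then $\Ann_{C/(f)}(y)$ has bounded degree.} Indeed, given $yw=fa$ with $w$ of large degree, one has $y(wg)=fga\in(fg)$ with $wg$ again of large degree, so $wg=fgb$ for some $b$; then $g(w-fb)=0$ and $w-fb$ is homogeneous of degree $\deg w$, hence $w-fb=0$ and $w\in(f)$.

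To invoke this fact I must know that $\Ann_C(g_k)$ has bounded degree; but $\Ann_C(g_k)\subseteq\Ann_C(f_kg_k)=\Ann_C(p_k)$, so it is enough to bound $\Ann_{\bar A/(p_{k+1},\ldots,p_{j-1})}(p_k)$, and this is exactly what Lemma~\ref{lemma:fakAux}, applied in $\bar A$ to the (unpermuted\,!) filter-regular sequence $p_k,p_{k+1},\ldots,p_{j-1}$ with $p_k$ in first position, provides. This completes the induction, hence the first assertion. The ``hence'' then follows from Lemma~\ref{lemma:fregBasics}: a filter-regular system of parameters $p_1,\ldots,p_r$ has $r=\dim A$, and $A/(f_1,\ldots,f_r)$, being a quotient of the finite-dimensional ring $A/(p_1,\ldots,p_r)$ (as $(p_1,\ldots,p_r)\subseteq(f_1,\ldots,f_r)$), is itself finite-dimensional, so the filter-regular sequence $f_1,\ldots,f_r$ is a filter-regular system of parameters.

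The main obstacle, as I see it, is the italicised fact together with the observation that one should feed it $\Ann_C(g_k)$ and that Lemma~\ref{lemma:fakAux} supplies precisely this bound \emph{without} reordering the sequence (a reordering would not be legitimate in general). Once that is in place the remaining difficulty is purely bookkeeping: one must choose the ``sufficiently high degree'' threshold uniformly over the finitely many conditions and intermediate rings, which is harmless since all the $f_k$ and $g_k$ sit in fixed degrees.
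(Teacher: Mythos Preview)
Your argument is correct and follows the same overall strategy as the paper: replace the $p_i$ by $f_i$ one at a time from the left, and control the later filter-regularity conditions with the help of Lemma~\ref{lemma:fakAux}. The paper organises the inner step slightly differently: to pass from $p_1,p_2,\ldots,p_r$ to $f_1,p_2,\ldots,p_r$ it runs a second induction on~$r$, so that at the last position it may assume \emph{both} $f_1,p_2,\ldots,p_{r-1}$ and $g_1,p_2,\ldots,p_{r-1}$ are already filter-regular, and then applies Lemma~\ref{lemma:fakAux} to the sequence beginning with~$g_1$. Your isolation of the italicised fact, together with the trivial containment $\Ann_C(g_k)\subseteq\Ann_C(p_k)$, lets you feed Lemma~\ref{lemma:fakAux} the original $p$-subsequence $p_k,\ldots,p_{j-1}$ directly and thereby dispense with that second induction; this is a modest but genuine streamlining. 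The final ``hence'' is handled the same way in both proofs, via Lemma~\ref{lemma:fregBasics}.
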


\begin{proof}
By induction on $r$ it suffices to prove that $f_1,p_2,\ldots,p_r$ is filter-regular.
We do this by induction on~$r$.
If $f_1 = p_1$ then we are done, so we may assume that $p_1 = f_1 g_1$ with
$f_1,g_1 \in A_+$.
If $f_1 x = 0$ then $p_1 x = 0$, so $x$~is in bounded degree
and the case $r=1$ is done. Now suppose that $r \geq 1$. Observe that we may
assume that both $f_1,p_2,\ldots,p_{r-1}$ and $g_1,p_2,\ldots,p_{r-1}$ are filter-regular.

Suppose that $p_rx \in (f_1,p_2,\ldots,p_{r-1})$ and that $x$~is in large degree. Then
$p_r g_1 x \in (p_1,p_2,\ldots,p_{r-1})$ and $g_1 x$ is in large degree, so
by filter-regularity of $p_1,\ldots,p_r$ we have
$g_1 x \in (p_1,\ldots,p_{r-1})$.

Hence for some $w \in A$ we have
$g_1(x - f_1 w) \in (p_2,\ldots,p_{r-1})$.
So $x - f_1 w \in (p_2,\ldots,p_{r-1})$ by Lemma~\ref{lemma:fakAux},
as $g_1,p_2,\ldots,p_{r-1}$ is filter regular.
Therefore $x \in (f_1,p_2,\ldots,p_{r-1})$, as required.
Last part: Follows from Lemma~\ref{lemma:fregBasics}\@.
\end{proof}

\begin{lemma}
\label{lemma:fregPowers}
Suppose that $p_1,\ldots,p_r \in A_+$,
and that $m_1,\ldots,m_r \geq 1$. Then
$p_1,\ldots,p_r$ is filter-regular
if and only if $p_1^{m_1},\ldots,p_r^{m_r}$ is.
\end{lemma}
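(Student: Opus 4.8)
The plan is to treat the two implications separately. The ``if'' direction is immediate: since $p_i$ divides $p_i^{m_i}$, Lemma~\ref{lemma:factor} shows that if $p_1^{m_1},\ldots,p_r^{m_r}$ is filter-regular then so is $p_1,\ldots,p_r$. For the ``only if'' direction I would argue by induction on~$r$, the real work being concentrated in the special case where one element is raised to a power: \emph{if $p_1,\ldots,p_r$ is filter-regular, then so is $p_1^m,p_2,\ldots,p_r$ for every $m\geq 1$.}

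This special case I would prove by a secondary induction on~$m$, the case $m=1$ being trivial. First, $\Ann_A(p_1^m)$ has bounded degree whenever $\Ann_A(p_1)$ does: from $p_1^mx=0$ one gets $p_1\cdot(p_1^{m-1}x)=0$ with $p_1^{m-1}x$ still in sufficiently high degree, hence $p_1^{m-1}x=0$, and one descends to $x=0$. (This also disposes of the base case $r=1$, the reverse implication there being Lemma~\ref{lemma:factor} once more.) For the condition on $p_i$ with $i\geq 2$, suppose $p_ix=p_1^mw_1+\sum_{j=2}^{i-1}p_jw_j$ with $x$ in sufficiently high degree. Then $p_ix$ lies in $(p_1,\ldots,p_{i-1})$, so filter-regularity of $p_1,\ldots,p_i$ gives $x=p_1y_1+\sum_{j=2}^{i-1}p_jy_j$ with $y_1$ again in high degree. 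Substituting and cancelling the summands lying in $(p_2,\ldots,p_{i-1})$ leaves $p_1\bigl(p_iy_1-p_1^{m-1}w_1\bigr)\in(p_2,\ldots,p_{i-1})$. Since $p_1,p_2,\ldots,p_{i-1}$ is filter-regular and $p_iy_1-p_1^{m-1}w_1$ is in sufficiently high degree, Lemma~\ref{lemma:fakAux} yields $p_iy_1-p_1^{m-1}w_1\in(p_2,\ldots,p_{i-1})$, and hence $p_iy_1\in(p_1^{m-1},p_2,\ldots,p_{i-1})$. By the induction hypothesis on~$m$ the sequence $p_1^{m-1},p_2,\ldots,p_i$ is filter-regular, so from $p_iy_1\in(p_1^{m-1},p_2,\ldots,p_{i-1})$ with $y_1$ in high degree we conclude $y_1\in(p_1^{m-1},p_2,\ldots,p_{i-1})$. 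Then $p_1y_1\in(p_1^m,p_2,\ldots,p_{i-1})$, so $x\in(p_1^m,p_2,\ldots,p_{i-1})$, as required.

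With the special case available, the induction on~$r$ is short. Let $p_1,\ldots,p_r$ be filter-regular with $r\geq 2$. By the special case $p_1^{m_1},p_2,\ldots,p_r$ is filter-regular; in particular $\Ann_A(p_1^{m_1})$ has bounded degree and, passing to $\bar{A}=A/(p_1^{m_1})$, the images $\bar{p}_2,\ldots,\bar{p}_r$ form a filter-regular sequence in~$\bar{A}$. Applying the induction hypothesis in~$\bar{A}$, the sequence $\bar{p}_2^{\,m_2},\ldots,\bar{p}_r^{\,m_r}$ is filter-regular in~$\bar{A}$; unwinding the definition of filter-regularity, this says exactly that $p_1^{m_1},p_2^{m_2},\ldots,p_r^{m_r}$ is filter-regular in~$A$.

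I expect the special case to be the main obstacle, and inside it the delicate point is that, having lifted $x$ into $(p_1,\ldots,p_{i-1})$, one must push the ``leading coefficient'' $y_1$ further, into $(p_1^{m-1},p_2,\ldots,p_{i-1})$: this is precisely where Lemma~\ref{lemma:fakAux}, the filter-regularity of the \emph{unraised} sequence, and the induction on~$m$ all come together. The only other thing to watch is that the finitely many ``sufficiently high degree'' thresholds used along the way can be absorbed into a single threshold, which is routine.
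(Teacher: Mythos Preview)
Your proof is correct and follows essentially the same route as the paper's: both reduce the ``only if'' direction to the special case of raising just the first element to a power, and both prove that special case by induction on~$m$, using filter-regularity of the original sequence to lift~$x$ into $(p_1,\ldots,p_{i-1})$, then Lemma~\ref{lemma:fakAux} to strip off the factor~$p_1$, and finally the inductive hypothesis for $p_1^{m-1}$ to finish. The only difference is organisational: the paper checks just the last annihilator condition (for~$p_r$), relying implicitly on the shorter initial subsequence for the earlier conditions, and leaves the reduction from the special case to the general statement unspoken; you spell out both of these---checking every index~$i$ directly and making the passage to $\bar A=A/(p_1^{m_1})$ explicit---which is a bit longer but entirely sound.
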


\begin{proof}
One direction follows by Lemma~\ref{lemma:factor}\@.
For the other it suffices to prove that if
$p_1,\ldots,p_r$ is filter-regular, then 
$p_1^m,p_2,\ldots,p_r$ is too.
This is true for $m=1$, so assume that $m \geq 2$ and that it is true for
$m-1$.  Suppose that $x$ has sufficiently large
degree and satisfies $p_r x \in (p_1^m,p_2,\ldots,p_{r-1})$.
So there is $y \in A$ with
\[
p_r x \in p_1^m y + (p_2,\ldots,p_{r-1}) \, .
\]
Since
$p_r x \in (p_1^m,p_2,\ldots,p_{r-1}) \subseteq (p_1,\ldots,p_{r-1})$
and $x$ has sufficiently large degree, we have $x \in (p_1,\ldots,p_{r-1})$
by filter-regularity of $p_1,\ldots,p_r$. So there is $z \in A$ with
$x \in p_1 z + (p_2,\ldots,p_{r-1})$.
Hence
$p_1 (p_r z - p_1^{m-1} y) \in (p_2,\ldots,p_{r-1})$.

From Lemma~\ref{lemma:fakAux} we deduce that
$p_r z - p_1^{m-1} y \in (p_2,\ldots,p_{r-1})$ and hence
\[
p_r z \in (p_1^{m-1},p_2,\ldots,p_{r-1}) \, .
\]
As $p_1^{m-1},p_2,\ldots,p_r$ is filter-regular by assumption,
and as~$z$ lies in sufficiently high degree, we deduce
that $z \in (p_1^{m-1},p_2,\ldots,p_{r-1})$. Combining this with
$x \in p_1 z + (p_2,\ldots,p_{r-1})$, we see that
$x \in (p_1^m,p_2,\ldots,p_{r-1})$.
\end{proof}

\begin{lemma}
\label{lemma:fregNilp}
Suppose that $f_1,\ldots,f_r$ and $g_1,\ldots,g_r$ are two sequences
in~$A_+$ with the property that each $g_i - f_i$
is nilpotent. Then $f_1,\ldots,f_r$
is a filter regular sequence if and only if $g_1,\ldots,g_r$ is.
\end{lemma}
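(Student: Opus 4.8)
The plan is to reduce the statement to an honest fact about regular sequences in Noetherian local rings. Since the hypothesis on $f_i - g_i$ is symmetric, it suffices to prove that if $f_1,\ldots,f_r$ is filter-regular then so is $g_1,\ldots,g_r$. The first move is to test filter-regularity \emph{prime by prime}. For a finitely generated graded $A$-module $M$ one has that $M$ is a finite-dimensional $k$-vector space if and only if $M_\frakp = 0$ for every prime $\frakp \neq A_+$; since localisation commutes with forming annihilators (both relevant modules being finitely generated), applying this to each $\Ann_{A/(h_1,\ldots,h_{i-1})}(h_i)$ shows that a sequence $h_1,\ldots,h_r \in A_+$ is filter-regular exactly when, for every prime $\frakp \neq A_+$, the images of $h_1,\ldots,h_r$ in the Noetherian local ring $A_\frakp$ form a ``weak regular sequence'': each term is a non-zero-divisor on the quotient of $A_\frakp$ by the preceding ones, without requiring the last quotient to be nonzero. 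Since the image of $g_i - f_i$ in $A_\frakp$ is nilpotent, the lemma will follow once the analogous statement is settled in each $A_\frakp$.

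So fix $\frakp \neq A_+$, write $R = A_\frakp$, and let $m$ be least with $f_m \notin \frakp$ (equivalently $g_m \notin \frakp$, as $g_m - f_m \in \frakp$), with $m = r+1$ if there is no such index. Then $f_m/1$ and $g_m/1$ are units of $R$, the conditions at indices $> m$ are vacuous on both sides, and $f_1/1,\ldots,f_{m-1}/1$ as well as $g_1/1,\ldots,g_{m-1}/1$ all lie in the maximal ideal; hence being a weak regular sequence over $R$ means precisely that $f_1/1,\ldots,f_{m-1}/1$ is an honest $R$-regular sequence, and likewise for $g$. Thus it remains to prove: \emph{if $f_1,\ldots,f_l$ is an $R$-regular sequence in a Noetherian local ring $(R,\mathfrak m)$ with all $f_i \in \mathfrak m$, and $n_1,\ldots,n_l \in R$ are nilpotent, then $f_1+n_1,\ldots,f_l+n_l$ is again an $R$-regular sequence.} This I would prove by induction on $l$. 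The case $l=1$ is the elementary observation that a non-zero-divisor plus a nilpotent is a non-zero-divisor: from $(p+n)z = 0$ with $n^N = 0$ one gets $pz = -nz$, hence $p^N z = \pm n^N z = 0$, hence $z = 0$ because $p^N$ is a non-zero-divisor.

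For the inductive step, start from a regular sequence $f_1,\ldots,f_{l+1}$. Passing to $R/(f_1,\ldots,f_l)$ and invoking the case $l=1$ there shows that $f_1,\ldots,f_l,g_{l+1}$ is regular, where $g_{l+1} = f_{l+1}+n_{l+1}$. Regular sequences in a Noetherian local ring are permutable, so $g_{l+1},f_1,\ldots,f_l$ is regular; hence $f_1,\ldots,f_l$ is a regular sequence over $R/(g_{l+1})$, and the induction hypothesis applied in that ring (where the images of the $n_i$ remain nilpotent) replaces it by $g_1,\ldots,g_l$. Permuting back, $g_1,\ldots,g_l,g_{l+1}$ is regular, completing the induction. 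The step I expect to need the most care is the initial reduction: at primes where some $f_i$ is a unit one only obtains weak, not genuine, regular sequences, and one must check that passing to the nilpotent perturbations leaves the cut-off index $m$ and the unit/non-unit alternative unchanged — which is exactly the place where nilpotence of $g_i - f_i$ is used. (A naive induction on $r$ in $A$ itself does not work, because $(f_1,\ldots,f_{i-1})$ and $(g_1,\ldots,g_{i-1})$ need not agree even in high degrees.) The only non-elementary external ingredient is permutability of regular sequences in local rings; everything else is the ``non-zero-divisor plus nilpotent'' device used in the base case.
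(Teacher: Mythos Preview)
Your argument is correct and takes a genuinely different route from the paper's. The paper's proof is three lines: since $(g_i-f_i)^{N_i}=0$ and $\operatorname{char} k = p$, the Frobenius identity gives $g_i^{p^{e_i}} = f_i^{p^{e_i}}$ as soon as $p^{e_i}\ge N_i$, and then Lemma~\ref{lemma:fregPowers} (filter-regularity is unchanged by replacing each term by a power, and conversely) finishes immediately. You instead test filter-regularity prime by prime, reducing to the stability of genuine regular sequences in a Noetherian local ring under nilpotent perturbation, which you then prove via permutability. The payoff of your approach is a characteristic-free statement; the cost is length and an appeal to the permutation theorem for regular sequences, whereas the paper needs only its own preceding lemma already in hand. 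One minor point to tidy: you treat $A_\frakp$ as an honestly commutative local ring, which is literal only for $p=2$; for odd~$p$ the odd-degree part of $A$ squares to zero and hence lies in every prime, so the reduction still behaves as you need, but a sentence acknowledging this would be in order.
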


\begin{proof}
By nilpotence there is $N_i \geq 1$ such that
$(g_i - f_i)^{N_i} = 0$. Choose $e_i \geq 1$ such that
$p^{e_i} \geq N_i$, then $g_i^{m_i} = f_i^{m_i}$ for $m_i = p^{e_i}$.
Now apply Lemma~\ref{lemma:fregPowers}\@.
\end{proof}

\begin{remark}
Lemmas \ref{lemma:factor}~and \ref{lemma:fregNilp} work well together.
Quotienting out the nilpotent generators reduces computational complexity
and often makes it easier to factorize the parameters, reducing the degrees.
\end{remark}

\begin{bsp}
For group number 38 of order 243,
Lemma~\ref{lemma:filterConstruct} yields parameters in degrees 6, 6,
12 and 16. The parameter in degree 12 has a factor in degree 2, and the last parameter can also be
replaced by one in degree 2. Benson's test then detects
completion in degree~$20$, which is perfect.
\end{bsp}

\section{Existence of filter-regular parameters in low degrees}
\label{sect:existence}
\noindent
We give a non-constructive method that detects
filter-regular parameters in low degrees (Proposition~\ref{Hlem3}), and
adapt Benson's test accordingly (Theorem~\ref{thm:BensonVariant})\@.

\begin{lemma}
\label{Hlem2}
Let $d \geq 1$.  Suppose that $A$~is a finitely generated module over $A(d)$,
the subalgebra generated by~$A_d$.
Then there is a finite field extension $L/k$
and an $x \in L \otimes_k A_d$ such that $\Ann_{A_L}(x)$
is finite-dimensional as a vector space.
\end{lemma}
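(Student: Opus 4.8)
The plan is to read the conclusion as a statement about associated primes, pass to the algebraic closure to obtain an infinite ground field, produce the desired element there, and then descend to a finite subextension.

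First I would reformulate the target. For homogeneous $x \in A_L$ the annihilator $\Ann_{A_L}(x) = \ker\!\bigl(x\cdot\colon A_L \to A_L\bigr)$ is a graded submodule of $A_L$, so every prime in $\Ass\bigl(\Ann_{A_L}(x)\bigr)$ lies in $\Ass(A_L)$ and contains~$x$. Since a finitely generated graded module is finite-dimensional over $L$ exactly when its support is contained in $V\!\bigl((A_L)_+\bigr)$, it follows that $\dim_L \Ann_{A_L}(x) < \infty$ as soon as $x \notin \mathfrak{q}$ for every $\mathfrak{q} \in \Ass(A_L)$ with $\mathfrak{q} \neq (A_L)_+$. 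So it suffices to produce, over a suitable finite extension $L/k$, an element $x \in (A_L)_d$ avoiding all associated primes of $A_L$ other than the irrelevant one.

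Next I would base-change to $\bar{k}$. Put $B = A \otimes_k \bar{k}$; this is again a connected noetherian graded commutative algebra, now over the infinite field $\bar{k}$, and it is finite as a module over $B(d) = \bar{k}[B_d]$, where $B_d = A_d \otimes_k \bar{k}$. The key claim is that for every $\mathfrak{q} \in \Ass(B)$ with $\mathfrak{q} \neq B_+$ the subspace $\mathfrak{q} \cap B_d$ is a \emph{proper} subspace of $B_d$. Indeed, if $B_d \subseteq \mathfrak{q}$ then $\mathfrak{q} \cap B(d)$ is a homogeneous prime of the connected algebra $B(d)$ containing every element of $B_d$, hence containing $B(d)_+$, hence equal to $B(d)_+$; since $B$ is a finite $B(d)$-module, integrality forces $\dim B/\mathfrak{q} = \dim B(d)/B(d)_+ = 0$, so the connected graded domain $B/\mathfrak{q}$ is a field and $\mathfrak{q} = B_+$, a contradiction. (If $B_d = 0$ the same computation shows $B$, hence $A$, has Krull dimension $0$, i.e.\ bounded degree, and then any $x$ — say $x = 0$ — works by the Remark following Lemma~\ref{lemma:fregBasics}.)

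Finally I would choose $x$ and descend. As $\Ass(B)$ is finite, $\{\mathfrak{q}\cap B_d : \mathfrak{q}\in\Ass(B),\ \mathfrak{q}\neq B_+\}$ is a finite collection of proper subspaces of $B_d$, and a vector space over the infinite field $\bar{k}$ is never a finite union of proper subspaces, so I may pick $x \in B_d$ outside all of them; then $\Ann_B(x)$ is finite-dimensional over $\bar{k}$ by the first step. This $x$ has coordinates in a finite subextension $L$ of $\bar{k}/k$, so $x \in A_d \otimes_k L = (A_L)_d$, and since $- \otimes_L \bar{k}$ is flat it commutes with the kernel of multiplication by $x$, giving $\Ann_{A_L}(x)\otimes_L\bar{k} \cong \Ann_B(x)$ and hence $\dim_L \Ann_{A_L}(x) = \dim_{\bar{k}}\Ann_B(x) < \infty$. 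The only genuinely substantive step here is the key claim of the previous paragraph — that no positive-dimensional associated prime can contain all of $A_d$ — which is exactly where the hypothesis that $A$ is finite over $A(d)$ enters, together with the elementary fact that a connected graded domain of Krull dimension $0$ is a field; the associated-primes reformulation of finite-dimensionality, the non-covering of a vector space by finitely many proper subspaces over an infinite field, and flat descent of the annihilator are all routine.
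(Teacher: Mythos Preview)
Your proof is correct and follows the same overall strategy as the paper's: pass to an infinite algebraic extension, observe that each non-maximal associated prime meets the degree-$d$ piece in a proper subspace, use the infinite ground field to avoid these finitely many proper subspaces, and descend to a finite subextension. The one notable difference is that the paper first applies Noether normalisation to cut $(A_K)_d$ down to a smaller parameter subspace $Y$ and then argues that no $\mathfrak{p}_i \neq A_K^+$ can contain $Y$; you instead work directly with all of $B_d$ and use integrality of $B$ over $B(d)$ to see that $B_d \subseteq \mathfrak{q}$ forces $\dim B/\mathfrak{q} = 0$ and hence $\mathfrak{q} = B_+$. This is a genuine simplification: the Noether normalisation step is not needed. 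Your associated-primes reformulation of when $\Ann(x)$ is finite-dimensional, together with the flat-base-change descent of the annihilator, is also cleaner than the paper's more hands-on verification of the same fact.
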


\begin{proof}
Choose an infinite field~$K$ that is algebraic over~$k$.
Let $x_1,\ldots,x_n$ be a $k$-basis of $A_d$. By assumption,
$A_K = K \otimes_k A$ is finite over $K[x_1,\ldots,x_n]$. Noether
normalisation~\cite[p.~69]{AtiMac} means that there are $K$-linear combinations
$y_1,\ldots,y_r$ of the $x_i$ such that $A_K$ is finite over the
\emph{polynomial} subalgebra $K[y_1,\ldots,y_r]$. Note that each
$y_j \in (A_K)_d$. Let $Y \subseteq (A_K)_d$ be the $K$-vector space
with basis the~$y_i$.

Denote by $A_K^+$ the irrelevant ideal.
As $A_K$ is noetherian we have
$\Ass(A_K)=\{\mathfrak{p}_1,\ldots,\mathfrak{p}_m\}$, a finite set.
Suppose that $Y \subseteq \mathfrak{p}_i$ for some~$i$.
If $x \in A_K^+$ then $x$~is integral over~$K[y_1,\ldots,y_r]$
and therefore $x^s$~lies in the ideal $(Y)$~of $A_K$ for some $s \geq 1$.
So $x \in \mathfrak{p}_i$ and therefore $\mathfrak{p}_i = A_K^+$.

That is, if $\mathfrak{p}_i \neq A_K^+$ then
$\mathfrak{p}_i \cap Y$ is a proper $K$-subspace of~$Y$. As $K$~is infinite,
$Y$ is not a union of finitely many proper subspaces. So there is an
$x \in Y$ which lies in no~$\mathfrak{p}_i$, except possibly
$A_K^+$. Suppose $a \in \Ann_{A_K}(x)$, then $x \in \Ann_{A_K}(a)$,
so $x \in \Ann_{A_K}(ab) \in \Ass A_K$ for some $b \in A_K$.
Hence $\Ann_{A_K}(ab)=A_K^+$, and therefore
$ab \in \Ann_{A_K}(A_K^+)$. But $\Ann_{A_K}(A_K^+)$ is finite
dimensional, as $A_K$ is noetherian and connected.
So there is a $N \geq 1$ such that $\left|c\right| \leq N$ for every
$c \in \Ann_{A_K}(A_K^+)$. But then $\left|a\right| \leq N$,
so $\Ann_{A_K}(x)$ is finite dimensional too.

Finally write $x$ as a $K$-linear combination of the $k$-basis $x_1,\ldots,x_n$
of~$A_d$. Let $L \supseteq k$ be the field generated
by the coefficients. Then $L/k$ is finite.
\end{proof}

\begin{proposition}
\label{Hlem3}
Let $d \geq 1$.  Suppose that $A$~is a finitely generated module over $A(d)$,
the subalgebra generated by~$A_d$.
Then there is a finite extension $K/k$
and a filter-regular system of parameters $x_1,\ldots,x_n$ for $A_K$ with
$\left|x_i\right| = d$ for every~$i$.
\end{proposition}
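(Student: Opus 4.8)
The plan is to induct on $n = \dim A$, the Krull dimension of~$A$, building the parameters one at a time by repeated use of Lemma~\ref{Hlem2}. If $n = 0$ then $A$ is already finite-dimensional, so the empty system of parameters works with $K = k$. For the inductive step ($n \geq 1$), Lemma~\ref{Hlem2} provides a finite extension $L/k$ together with an element $x_1 \in L \otimes_k A_d = (A_L)_d$ whose annihilator $\Ann_{A_L}(x_1)$ is finite-dimensional. In particular $x_1$, regarded as a sequence of length one, is filter-regular in~$A_L$.

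Next I would apply the inductive hypothesis to the quotient ring $B := A_L/(x_1)$, over the base field~$L$. It is again connected (since $\abs{x_1} = d \geq 1$), noetherian, graded-commutative and finitely generated, so the only things to check are that $B$ is a finite module over the subalgebra $B(d)$ generated by~$B_d$, and that $\dim B = n-1$. The former holds because finiteness over the degree-$d$ subalgebra is inherited under extension of the coefficient field (as $L\otimes_k A(d)$ is the subalgebra generated by $(A_L)_d$) and under passing to a graded quotient. For the latter, Krull's principal ideal theorem gives $\dim B \geq \dim A_L - 1 = n-1$; and $\dim B = n$ is impossible, for then some minimal prime~$\mathfrak{p}$ of~$A_L$ with $\dim A_L/\mathfrak{p} = n$ would contain~$x_1$, and writing $\mathfrak{p} = \Ann_{A_L}(a)$ with $a \neq 0$ homogeneous (a minimal prime is associated) we would get $a x_1 = 0$, hence $a A_L \subseteq \Ann_{A_L}(x_1)$ --- absurd, since $aA_L \cong A_L/\mathfrak{p}$ is infinite-dimensional over~$k$ as $\dim A_L/\mathfrak{p} = n \geq 1$. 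The inductive hypothesis thus yields a finite extension $K/L$ and a filter-regular system of parameters $\bar{x}_2,\ldots,\bar{x}_n$ for $B_K = A_K/(x_1)$ with every $\abs{\bar{x}_i} = d$.

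Finally I would lift each $\bar{x}_i$ to an element $x_i \in (A_K)_d$ and verify that $x_1,x_2,\ldots,x_n$ is a filter-regular system of parameters for $A_K = K \otimes_k A$. Flatness of $K$ over~$L$ gives $\Ann_{A_K}(x_1) = K \otimes_L \Ann_{A_L}(x_1)$, still finite-dimensional; for $i \geq 2$ one has $A_K/(x_1,\ldots,x_{i-1}) = B_K/(\bar{x}_2,\ldots,\bar{x}_{i-1})$, so the annihilator of~$x_i$ there is finite-dimensional by the filter-regularity of $\bar{x}_2,\ldots,\bar{x}_n$, and likewise $A_K/(x_1,\ldots,x_n) = B_K/(\bar{x}_2,\ldots,\bar{x}_n)$ is finite-dimensional. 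Since $[K:k] < \infty$ we have $\dim A_K = \dim A = n$, so $x_1,\ldots,x_n$ is a filter-regular sequence of length $n = \dim A_K$ with finite-dimensional quotient, hence a filter-regular system of parameters by Lemma~\ref{lemma:fregBasics}. The one step that is more than bookkeeping is the dimension count $\dim A_L/(x_1) = n-1$, i.e.\ the fact that an element with finite-dimensional annihilator drops the Krull dimension by exactly one; everything else is keeping track of how finiteness over the degree-$d$ subalgebra, filter-regularity, and Krull dimension behave under extending scalars and under quotienting by~$x_1$.
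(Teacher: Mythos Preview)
Your proof is correct and follows essentially the same inductive strategy as the paper's proof: apply Lemma~\ref{Hlem2} to produce a first degree-$d$ element with finite-dimensional annihilator, pass to the quotient, and induct on Krull dimension. The paper's version is terser---it simply notes that $\dim(A_L/(x_1)) < \dim(A)$ and invokes induction---while you have carefully spelled out why the quotient inherits the hypotheses, why the dimension drops by exactly one (which the paper leaves implicit via Krull's principal ideal theorem), and how the lifted sequence is filter-regular in~$A_K$; but the underlying argument is the same.
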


\begin{proof}
If $\dim(A)=0$ then
$\dim_k(A)< \infty$ and we are done, with $n=0$. If $\dim(A) \geq 1$
then by Lemma~\ref{Hlem2} there is a finite extension $L/k$ and
an $x_1 \in (A_L)_d$ such that $\Ann_{A_L}(x_1)$ is finite dimensional.
This means that $\dim(A_L/(x_1)) < \dim(A)$. The result follows by induction over
the Krull dimension.
\end{proof}

\subsection*{A variant of Benson's test for completion}
Benson's test for completion
\cite[Theorem~10.1]{Benson:DicksonCompCoho} makes two separate uses of a filter-regular
system of parameters for $H^*(G,k)$. First one determines the filter degree type.
Then one obtains a degree bound by combining the type with the sum of the parameter degrees.

Determining the type calls for explicitly constructed parameters,
which are often in high degree.
We now show that one can use a different parameter system at each of the two stages.
This allows us to detect completion earlier using Proposition~\ref{Hlem3},
as an existence proof is
all that is needed for the degree sum.
\medskip

\noindent
Following Benson \cite[\S10]{Benson:DicksonCompCoho} we write
$\tau_N H^*(G,k)$ for the approximation to the graded commutative
ring $H^*(G,k)$ obtained by taking all generators and relations in
degree${}\leq N$.
Recall that if $K/k$~is a field extension then
$H^*(G,K) \cong K \otimes_k H^*(G,k)$, and hence
$\tau_N H^*(G,K) \cong K \otimes_k \tau_N H^*(G,k)$.
The following result is based on Theorem~10.1 of
Benson's paper~\cite{Benson:DicksonCompCoho}\@.

\begin{theorem}
\label{thm:BensonVariant}
Let $G$ be a finite group, $k$ a field of characteristic $p$, and $K/k$ a field
extension.
Suppose that $r = \prank(G) \geq 2$. Suppose that
$\zeta_1,\ldots,\zeta_r \in \tau_N H^*(G,k)$ and
$\kappa_1,\ldots,\kappa_r \in \tau_N H^*(G,K)$
have the following properties:
\begin{enumerate}
\item
Each system is a filter-regular homogeneous system of parameters in its
respective ring.
Let $(d_0,\ldots,d_r)$ be the type of $\zeta_1,\ldots,\zeta_r$.
\item
The images of the~$\zeta_i$ form a homogeneous system of
parameters in $H^*(G,k)$.
\item
Set $n_i = \abs{\kappa_i}$ and
$\alpha' = \max_{0 \leq i \leq r-2} (d_i + i)$.
Then $n_i \geq 2$ for all~$i$, and
\begin{equation}
\label{eqn:BensonVariant}
N > \max(\alpha',0) + \sum_{j=1}^r (n_j - 1) \, .
\end{equation}
\end{enumerate}
Then the map $\tau_N H^*(G,k) \rightarrow H^*(G,k)$ is an isomorphism.

Moreover, if a Sylow $p$-subgroup of~$G$ has centre of $p$-rank at least two,
then in Eqn.~\eqref{eqn:BensonVariant}
we only have to require~$\geq$\@.
\end{theorem}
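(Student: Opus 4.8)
The plan is to follow the proof of Benson's completion test \cite[Theorem~10.1]{Benson:DicksonCompCoho}, but to split between the two systems $\zeta_1,\ldots,\zeta_r$ and $\kappa_1,\ldots,\kappa_r$ the two separate roles played there by a single filter-regular system of parameters. The first step is to dispose of the field extension. Since $H^*(G,K)\cong K\otimes_k H^*(G,k)$ and $\tau_N H^*(G,K)\cong K\otimes_k \tau_N H^*(G,k)$, faithful flatness of $K$ over $k$ shows that $\tau_N H^*(G,k)\to H^*(G,k)$ is an isomorphism if and only if the corresponding map over $K$ is. Moreover the images of the $\zeta_i$ in $\tau_N H^*(G,K)$ still form a filter-regular homogeneous system of parameters whose image in $H^*(G,K)$ is a homogeneous system of parameters, and by Benson's \cite[Theorem~4.5]{Benson:DicksonCompCoho} their type is still $(d_0,\ldots,d_r)$, since filter-regularity, the parameter property and the type are all preserved under the flat base change $k\to K$. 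Hence I may assume $K=k$ and regard both systems as living in $\tau_N H^*(G,k)$.

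The key observation is that Benson's argument invokes a filter-regular parameter system twice, and the two invocations are logically independent. The first reads off the filter degree type and, through the relation between the type and the top nonzero degrees of the local cohomology modules $H^i_{A_+}\!\bigl(H^*(G,k)\bigr)$ in \cite[\S4]{Benson:DicksonCompCoho}, bounds the Castelnuovo--Mumford regularity of $H^*(G,k)$. The second passes to the finite-dimensional quotient of $H^*(G,k)$ by the parameters, whose top nonzero degree is at most that regularity bound plus $\sum_j(\abs{h_j}-1)$; this in turn bounds the degrees in which generators and relations of $H^*(G,k)$ can occur, which is precisely what must be compared with $N$. Because Castelnuovo--Mumford regularity is an intrinsic invariant of $H^*(G,k)$ and all filter-regular parameter systems share the same type, I would take the regularity bound from the $\zeta$-system and the quotient step from the $\kappa$-system. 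Concretely, one proves $\Reg H^*(G,k)\le\max(\alpha',0)$: the contribution of $H^i_{A_+}$ for $0\le i\le r-2$ is bounded by $d_i+i$, hence by $\alpha'$, while the two topmost local cohomology modules contribute at most $0$, using the monotonicity $d_{i-1}+(i-1)\le d_i+i$ forced by the type inequalities together with \cite[\S4]{Benson:DicksonCompCoho} (and, in any case, Symonds' regularity theorem \cite{Symonds:RegProof}). Feeding the $\kappa$-system into the quotient step then contributes $\sum_{j=1}^r(n_j-1)$, and as soon as $N$ exceeds $\max(\alpha',0)+\sum_{j=1}^r(n_j-1)$ the entire comparison of $\tau_N H^*(G,k)$ with $H^*(G,k)$ takes place in a range of degrees faithfully captured by $\tau_N$, so the map is an isomorphism. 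The hypotheses $r\ge 2$ and $n_i\ge 2$ keep this arithmetic in the stated form, and the assumption that the $\zeta_i$ map to a genuine system of parameters in $H^*(G,k)$ is what allows the type to be transported from $\tau_N H^*(G,k)$ to $H^*(G,k)$.

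For the final clause, assume a Sylow $p$-subgroup of $G$ has centre of $p$-rank at least two. Then Duflot's theorem gives $\depth H^*(G,k)\ge 2$, so $H^0_{A_+}\!\bigl(H^*(G,k)\bigr)=H^1_{A_+}\!\bigl(H^*(G,k)\bigr)=0$; removing the bottom of the local cohomology tightens the regularity estimate, and hence the degree bound for generators and relations, by one, so that the strict inequality in \eqref{eqn:BensonVariant} may be replaced by $N\ge\max(\alpha',0)+\sum_{j=1}^r(n_j-1)$. I expect the main obstacle to be not a single conceptual point but the precise degree bookkeeping: one must check that decoupling the single parameter system into a type-carrying system and a degree-carrying system incurs no hidden loss, and that every degree comparison really does happen in the range where $\tau_N H^*(G,k)$ reflects $H^*(G,k)$ faithfully; pinning down the constants so that they come out as the clean bound $\max(\alpha',0)+\sum_{j=1}^r(n_j-1)$, with the refinement to $0\le i\le r-2$ and with the relaxation to $\ge$ under Duflot's hypothesis, is where the real effort goes.
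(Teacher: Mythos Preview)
Your high-level idea — decouple the ``type-reading'' role from the ``degree-sum'' role of a filter-regular system — is exactly the point of the theorem. But there is a genuine gap in your execution, and the paper's route avoids it while being much shorter.

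The gap is the step where you ``transport the type from $\tau_N H^*(G,k)$ to $H^*(G,k)$'' and then bound $\Reg H^*(G,k)$ and the local cohomology $H^i_{A_+}\bigl(H^*(G,k)\bigr)$ by the $d_i$. The numbers $(d_0,\ldots,d_r)$ are computed in $\tau_N H^*(G,k)$; knowing only that the images of the $\zeta_i$ form a homogeneous system of parameters in $H^*(G,k)$ tells you nothing about filter-regularity or type there, since those depend on the full ring structure of $H^*(G,k)$, which is precisely what is in question. Your Symonds fallback does give $\Reg H^*(G,k)\le 0$, but then $\alpha'$ and the $\zeta$-system drop out of the argument entirely, and you still owe a proof that the images of the $\kappa_i$ are filter-regular in $H^*(G,k)$ before your quotient-degree estimate applies.

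The paper never leaves $\tau_N H^*(G,K)$ until the last line. After base change the $\zeta_i$ remain filter-regular of type $(d_0,\ldots,d_r)$ in $\tau_N H^*(G,K)$, so by the implication (i)$\Rightarrow$(iii) of \cite[Theorem~4.5]{Benson:DicksonCompCoho} one has $a^i_{\mathfrak m}\bigl(\tau_N H^*(G,K)\bigr)\le d_i$. These $a$-invariants are \emph{intrinsic} to the approximation ring, so the quantity $\alpha$ that appears in the hypothesis of Benson's Theorem~10.1 — for \emph{any} filter-regular system there — satisfies $\alpha\le\alpha'$. One then applies Benson's Theorem~10.1 as a black box to the $\kappa_i$: they are filter-regular in $\tau_N H^*(G,K)$ by hypothesis, and their images form a system of parameters in $H^*(G,K)$ because the $\zeta_i$'s do (the two ideals have the same radical in $\tau_N$, and that relation survives under the map to $H^*(G,K)$). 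The inequality \eqref{eqn:BensonVariant} is then exactly Benson's degree hypothesis, whence $\tau_N H^*(G,K)\cong H^*(G,K)$ and descent to $k$ is immediate; the final clause is Benson's Remark~10.6(v). In short: rather than re-running Benson's proof with two systems inside $H^*(G,k)$, the paper uses the $a$-invariants of $\tau_N$ as the intrinsic bridge and invokes Theorem~10.1 off the shelf.
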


\begin{proof}
Since $H^*(G,K) \cong K \otimes_k H^*(G,k)$, the~$\zeta_i$
are a filter-regular system of parameters of type
$(d_0,\ldots,d_r)$ in $\tau_N H^*(G,K)$ as well.
By (i)${}\Rightarrow{}$(iii) in
Theorem~4.5 of~\cite{Benson:DicksonCompCoho}, it follows that
$a^i_{\mathfrak{m}}(\tau_N H^*(G,K)) \leq d_i$ for each $0 \leq i \leq r$.
Hence $\alpha' \geq \alpha$ for $\alpha$ as in Benson's Theorem 10.1\@.
Since the $\zeta_i$ are a system of parameters for $H^*(G,K)$,
so are the~$\kappa_i$. Hence applying Benson's Theorem~10.1 to the $\kappa_i$
we deduce that
$\tau_N H^*(G,K) \cong H^*(G,K)$ and therefore
$\tau_N H^*(G,k) \cong H^*(G,k)$. The last part follows
from the corrected version\footnote{It is the \emph{centre}
of the Sylow $p$-subgroup which should have rank at least two.}
of Benson's Remark 10.6(v)\@.
\end{proof}

\begin{bsp}
For the Sylow $2$-subgroup of~$\mathit{Co}_3$,
the computer finds filter-regular parameters in degrees $8$, $4$, $6$,~$1$\@.
Killing the first two
parameters and then applying Proposition~\ref{Hlem3} shows
that there are filter-regular parameters in degrees $8$, $4$, $2$,~$2$.
So Theorem~\ref{thm:BensonVariant} allows us to apply Benson's test in degree~$14$
(which is perfect).
\end{bsp}

\section{Notes on the Implementation}
\label{sect:notes}
\noindent
Our strategy for computing the cohomology of
all groups of order 128 was to combine improved theoretical methods
(Proposition~\ref{Hlem3},
Lemma~\ref{lemma:filterConstruct} and Lemma~\ref{lemma:factor})
with an efficient implementation.

The first author's package~\cite{habil} was not powerful enough,
nor was it in a form suitable for distribution.
The C routines for minimal projective resolutions~\cite{habil}
were however serviceable enough,
and reimplementing them from scratch would have
been costly.  We still needed \textsf{GAP}~\cite{GAP4} for the group-theoretic
work, and in addition we now needed a commutative
algebra system such as \textsc{Singular}~\cite{Singular}.

The Sage project~\cite{Sage} perfectly matched our
needs. It provides a
common interface for and contains a distribution of several independent
computer algebra systems, including
\textsc{Singular}~\cite{Singular} and \textsf{GAP}.
Sage is based on Cython~\cite{Cython}, which is a
compiled version of Python. It makes linking against C-code easy,
and it yields a huge speed-up compared with interpreted code.
Sage is free open source software, so that it is more easy for us to make our
results publicly available. 

The second author implemented the (Yoneda) product and several other homological algebra constructions
in Cython~\cite{Cython}.
He also implemented
the computation of the ring structure and of the completeness criterion provided by
Theorem~\ref{thm:BensonVariant}. For this purpose, fast Gr{\"o}b\-ner basis computations in
graded commutative rings are particularly important. This is provided by 
\textsc{Singular}~\cite{Singular}.

Our project resulted in an optional Sage package,
called the $p$-Group Cohomology Package~\cite{SimonsProg}\@, with constituents as described above.
%
%
%
%
%
The package also provides general Massey products and restricted
Massey powers. The latest version features Persistent Group
Cohomology, introduced by Graham Ellis and the second
author~\cite{EllisKing:Persistent}.

\section{Experimental Results}
\label{sect:Experimental}
\noindent
We computed the mod-$2$ cohomology rings of all 2328 groups of order 128\@.
The results -- omitted here for space reasons -- may be
consulted online~\cite{GreenKing:128website} and read into our cohomology package~\cite{SimonsProg}.
Further cohomology computations include:
\begin{itemize}
\item Some further $2$-groups,
including the Sylow 2-subgroups of $\mathit{HS}$~and $\mathit{Co}_3$;
\item All but six groups of order dividing $3^5$, together with some
of order $3^6$; and
\item All groups of order dividing $5^4$.
%
\end{itemize}
Each computation includes a minimal ring presentation, the Poincar{\'e} series and the depth.
Our results agree with Carlson's for the groups of
order~$64$~\cite[Appendix]{CarlsonTownsley}, and
with the first author's for groups of order $3^4$~and $5^4$~\cite{habil}\@.

The first complete computation for order~$128$ was in Summer 2008,
using Lemmas \ref{lemma:factor}~and \ref{lemma:filterConstruct} to improve Benson's criterion.
Split over several processors, the real time used adds up to roughly 10 months.
Adding in the existence result for low-degree parameters
(Proposition~\ref{Hlem3}), reduced the
computation time to a total of about 2 months, on a Sun X4450
machine with 2.66~GHz.

Our package can now compute the cohomology of
all groups of order 64 in about 20 minutes on a Mac Pro
desktop machine running at 2.6 Ghz with 8 GB RAM (Darwin Kernel
Version 10.0.0) in 64 bit mode. 

%

\subsection*{Extremal cases}
\noindent
We highlight some order~$128$ groups whose cohomology is extremal in various ways.
We adopt the numbering of the Small Groups library~\cite{BeEiOBr:Millennium}.

The improved Benson criterion (Theorem~\ref{thm:BensonVariant}) is very efficient.
In 1779 out of 2328 cases it already applies in the degree of
the last generator or relation.
The worst performance is for $2^4 \times D_8$ (number 2320): the presentation is complete from
degree~$2$ onwards, but this is only detected $4$ degrees later.

Number 836 is the Sylow $2$-subgroup of one double cover of $\mathit{Sz}(8)$.
Its cohomology ring has 65 minimal generators and 1859 minimal relations. This is the largest presentation by
far: the other groups have at most 39 generators and at most 626 relations. But it is group number
562 which has the highest generator degree (17) and the highest relation degree (34)\@.

The longest computation took about 11 days. Surprisingly, the group concerned (number 2327) was~$2^{1+6}_-$\@:
the cohomology of this extraspecial group has been known since
1971~\cite{Quillen:Extraspecial} and is easy to write down. The computations
for groups number 2298 and 2300 took about a week each.

\begin{rk}
Other groups of order $128$ have been studied
before too.
Adem--Milgram \cite{AdMi:M22} determined the cohomology of group $931$,
and Maginnis~\cite{Maginnis:J2} treated group $934$:
these are the Sylow subgroups of the sporadic groups
$M_{22}$ and $J_2$.
\end{rk}

\subsection*{$a$-invariants}
Benson--Carlson duality and Symonds' regularity theorem
are two instances of the significance of the
local cohomology of $H^*(G)$~\cite{Benson:MSRI}\@.
Regularity measures the offset of the vanishing line of (bigraded)
local cohomology. The $a$-invariants give more detailed vanishing information.

Let $k$~be a field and $R$ a connected finitely presented graded commutative
$k$-algebra. Let $M$ be a finitely generated graded $R$-module, and
$\mathfrak{m}$ the ideal in~$R$ of all positive degree elements.
Recall that the $a$-invariants of $M$ are defined by
\[
a_{\mathfrak{m}}^i(M) = \max \{m \mid H^{i,m}_{\mathfrak{m}}(M) \neq 0 \} \, ,
\]
with $a_{\mathfrak{m}}^i(M) = -\infty$ if $H^i_{\mathfrak{m}}(M)=0$.
Benson provides a recipe for computing the $a$-invariants from a
filter-regular system of parameters \cite[Lemma~4.3]{Benson:DicksonCompCoho}.

We computed these invariants for all groups of
orders 64~and 128\@.
Interestingly, amongst the 2595 groups of these
orders there are only 15 cases where the $a$-invariants are not weakly
increasing. These are listed in Table~\ref{table:nonMonotone}\@.

\begin{table}
\renewcommand{\arraystretch}{1.5}
$\begin{array}{|c|c|cccccc|c|}
\hline
\abs{G} & \text{Group}
& a^0_{\mathfrak{m}}
& a^1_{\mathfrak{m}}
& a^2_{\mathfrak{m}}
& a^3_{\mathfrak{m}}
& a^4_{\mathfrak{m}}
& a^5_{\mathfrak{m}} & \text{Note} \\
\hline \hline
64 & 0242 & -\infty & -\infty & -3 & -5 & -4 & &
\text{Sylow in $L_3(4)$} \\
\hline \hline
\multirow{14}{*}{$128$} &
0391 & -\infty & -\infty & -3 & -5 & -4 & & \\
\cline{2-9}
& 0741 & -\infty & -4      & -5 & -3 &    & & \\
\cline{2-9}
& 0749 & -\infty & -3      & -7 & -3 &    & & \\
\cline{2-9}
& 0836 & -\infty & -4      & -5 & -4 & -4 & &
\text{Sylow in one $2.\mathit{Sz}(8)$} \\
\cline{2-9}
& 0931 & -\infty & -\infty & -3 & -4 & -4 & & \text{Sylow in $M_{22}$} \\
\cline{2-9}
& 0934 & -\infty & -\infty & -3 & -5 & -4 & & \text{Sylow in $J_2$} \\
\cline{2-9}
& 1411 & -\infty & -\infty & -\infty & -4 & -6 & -5 & \\
\cline{2-9}
& 1931 & -\infty & -\infty & -3 & -5 & -4 & & \\
\cline{2-9}
& 2005 & -\infty & -\infty & -3 & -5 & -4 & & \\
\cline{2-9}
& 2191 & -\infty & -\infty & -4 & -6 & -4 & & \\
\cline{2-9}
& 2258 & -\infty & -\infty & -\infty & -4 & -6 & -5 & \text{Sylow in
$2 \times L_3(4)$} \\
\cline{2-9}
& 2261 & -\infty & -\infty & -3 & -5 & -4 & & \\
\cline{2-9}
& 2272 & -\infty & -\infty & -3 & -5 & -4 & & \\
\cline{2-9}
& 2300 & -\infty & -\infty & -3 & -4 & -4 & & \\
\hline
\end{array}$
\vspace*{10pt}
\caption{The fifteen groups of orders 64 and 128 whose $a$-invariants
are not weakly increas�ng.}
\label{table:nonMonotone}
\end{table}

\section{Depth and the Duflot bound}
\label{sect:Duflot}
\noindent
Several results relate the group structure
of a finite group~$G$ to the commutative algebra of its mod-$p$ cohomology ring
$H^*(G)$.
For the Krull dimension and depth we have the following inequalities,
where $S$ denotes a Sylow $p$-subgroup of~$G$:
\begin{equation}
\label{eqn:QuillenDuflot}
\prank(Z(S)) \leq \depth H^*(S) \leq \depth H^*(G)
\leq \dim H^*(G) = \prank(G) \, .
\end{equation}
The first inequality is Duflot's theorem~\cite[\S12.3]{CarlsonTownsley}\@.
The second is Benson's \cite[Thm~2.1]{Benson:NYJM2}
and
``must be well known''\@. The third is
automatic for finitely generated connected $k$-algebras.
The last equality is due to Quillen~\cite[Coroll~8.4.7]{CarlsonTownsley}\@.

\begin{defn}
Let $G, p, S$ be as above. The
Cohen--Macaulay defect $\CMd_p(G)$
and the
Duflot excess $\De_p(G)$
are defined by
\begin{xalignat*}{2}
\CMd(G) & = \dim H^*(G) - \depth H^*(G) &
\De(G) & = \depth H^*(G) - \prank(Z(S)) \, .
\end{xalignat*}
\end{defn}

\noindent
It follows from Eqn.~\eqref{eqn:QuillenDuflot} that
\begin{xalignat}{3}
\label{eqn:gtD-CMd}
\CMd(G), \De(G) & \geq 0 &
\CMd(G) & \leq \CMd(S) &
\De(G) & \geq \De(S) \, .
\end{xalignat}
The term Cohen--Macaulay defect
(sometimes deficiency)
is already in use.

\begin{ques}
How (for large values of~$n$) are the $p$-groups of order $p^n$ distributed
on the graph with $\CMd(G)$ on the $x$-axis and $\De(G)$ on the $y$-axis?
\end{ques}

\begin{bspe}
Every abelian group has $(\CMd, \De) = (0,0)$. So does every $p$-central group, such as
the Sylow $2$-subgroup of $\mathit{SU}_3(4)$.

Quillen showed that the extraspecial $2$-group $2^{1+2n}_+$ has
Cohen--Macaulay cohomology~\cite{Quillen:Extraspecial}.
So this group has $(\CMd,\De) = (0,n)$.
For odd~$p$, Minh proved~\cite{Minh:EssExtra} that $p^{1+2n}_+$
has $(\CMd, \De) = (n,0)$. With one exception:
$3^{1+2}_+$ has $(\CMd,\De) = (0,1)$~\cite{MilgramTezuka}\@.

One way to produce groups with small $\De(G)/\CMd(G)$ ratio
is by iterating the wreath product construction. By passing
from $H$~to $H \wr C_p$ one multiplies the $p$-rank by~$p$ but increases the
depth by one only~\cite{CaHe:Wreath}\@.
\end{bspe}

\noindent
Table~\ref{table:de128} lists the number of groups with each
value of $(\CMd,\De)$, for various orders.

\begin{table}
$\begin{tabular}{c@{\hspace*{25pt}}c|c@{\hspace*{25pt}}c}
$\begin{array}{cc|c|c|c|c|}
\cline{3-6}
\multirow{4}{*}{$\De$} & 3 & 0 & & & \\
\cline{3-6}
& 2 & 5 & 1 & & \\
\cline{3-6}
& 1 & 45 & 22 & 1 & \\
\cline{3-6}
& 0 & 69 & 103 & 21 & 0 \\
\cline{3-6}
& \multicolumn{1}{c}{}
& \multicolumn{1}{c}{0}
& \multicolumn{1}{c}{1}
& \multicolumn{1}{c}{2}
& \multicolumn{1}{c}{3} \\
\multicolumn{2}{c}{N=64} & \multicolumn{4}{c}{\CMd}
\end{array}$
& & &
$\begin{array}{cc|c|c|c|c|}
\cline{3-6}
\multirow{4}{*}{$\De$} & 3 & 1 & & & \\
\cline{3-6}
& 2 & 27 & 10 & & \\
\cline{3-6}
& 1 & 256 & 220 & 32 & \\
\cline{3-6}
& 0 & 353 & 1123 & 292 & 14 \\
\cline{3-6}
& \multicolumn{1}{c}{}
& \multicolumn{1}{c}{0}
& \multicolumn{1}{c}{1}
& \multicolumn{1}{c}{2}
& \multicolumn{1}{c}{3} \\
\multicolumn{2}{c}{N=128} & \multicolumn{4}{c}{\CMd}
\end{array}$
\\ &&& \\ \hline &&& \\
$\begin{array}{cc|c|c|c|}
\cline{3-5}
\multirow{3}{*}{$\De$} & 2 & 0 & & \\
\cline{3-5}
& 1 & 2 & 1 & \\
\cline{3-5}
& 0 & 6 & 6 & 0 \\
\cline{3-5}
& \multicolumn{1}{c}{}
& \multicolumn{1}{c}{0}
& \multicolumn{1}{c}{1}
& \multicolumn{1}{c}{2} \\
\multicolumn{2}{c}{N=3^4} & \multicolumn{3}{c}{\CMd}
\end{array}$
& & &
$\begin{array}{cc|c|c|c|}
\cline{3-5}
\multirow{3}{*}{$\De$} & 2 & 0 & & \\
\cline{3-5}
& 1 & 0 & 0 & \\
\cline{3-5}
& 0 & 6 & 7 & 2 \\
\cline{3-5}
& \multicolumn{1}{c}{}
& \multicolumn{1}{c}{0}
& \multicolumn{1}{c}{1}
& \multicolumn{1}{c}{2} \\
\multicolumn{2}{c}{N=5^4} & \multicolumn{3}{c}{\CMd}
\end{array}$
\end{tabular}$
\vspace*{10pt}
\caption{Distribution of the groups of order $N$ by defect~$\CMd$ and excess~$\De$}
\label{table:de128}
\end{table}

\begin{remarks}
\label{rks:graph}
The group of order~64 with $(\CMd,\De)=(1,2)$ is group number 138\@.
The one with $(\CMd,\De)=(2,1)$ is number 32\@.

The 14 groups of order 128 with $(\CMd,\De) = (3,0)$ are
investigated in Proposition~\ref{prop:calc} below.
The one with $(\CMd,\De) = (0,3)$ is the extraspecial
group $2^{1+6}_+$\@.

The group of order 81 with $(\CMd,\De) = (1,1)$ is the Sylow
3-subgroup of~$S_9$. One of the groups of order 625 with $(\CMd,\De) = (2,0)$
is the Sylow 5-subgroup of the Conway group $\mathit{Co}_1$.
\end{remarks}

\begin{ques}
\label{ques:Duflot}
Is the Duflot bound sharp for most groups of a given order~$p^n$? That is,
do most groups of order $p^n$ lie on the bottom row $\De=0$ of the
$\CMd/\De$-graph? If so, is this effect more pronounced for large primes~$p$?
\end{ques}

\section{The strong form of the regularity conjecture}
\label{sect:strong}
\noindent
Benson conjectured~\cite{Benson:DicksonCompCoho} and Symonds proved~\cite{Symonds:RegProof} that
$H^*(G,k)$ has Castelnuovo--Mumford
regularity zero. Equivalently, the cohomology ring $H^*(G,k)$ always contains a strongly
quasi-regular system of parameters.
Kuhn has applied Symonds' result to
the study of central essential cohomology~\cite{Kuhn:Cess}\@.
The following stronger form of the conjecture remains open.

\begin{conj}[Benson: see p.~175 of~\cite{Benson:DicksonCompCoho}]
\label{conj:VSQR}
Let $G$ be a finite group, $p$ a prime number and $k$ a field
of characteristic~$p$. The cohomology ring $H^*(G,k)$ always contains a
very strongly quasi-regular system of parameters.
\end{conj}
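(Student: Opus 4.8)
The plan is to recast Conjecture~\ref{conj:VSQR} as a vanishing statement for the local cohomology of $A = H^*(G,k)$ and then to attack that statement through the topological duality special to group cohomology. Write $r = \prank(G)$ and let $\mathfrak{m} = A_+$. Using the dictionary between the type of a filter-regular parameter system and the $a$-invariants --- implication (i)$\Rightarrow$(iii) of Theorem~4.5 in~\cite{Benson:DicksonCompCoho} and its converse --- one sees that $A$ carries a very strongly quasi-regular system of parameters if and only if $a^i_{\mathfrak{m}}(A) \leq -i-1$ for $0 \leq i \leq r-1$ and $a^r_{\mathfrak{m}}(A) \leq -r$. Symonds' theorem~\cite{Symonds:RegProof} already supplies $a^i_{\mathfrak{m}}(A) \leq -i$ for every~$i$; in particular the top condition holds, and the cases $i < \depth(A)$ of the strict bound are automatic, since there $a^i_{\mathfrak{m}}(A) = -\infty$. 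The entire content of the conjecture is therefore to upgrade Symonds' estimate to the \emph{strict} inequality $a^i_{\mathfrak{m}}(A) + i \leq -1$ in the range $\depth(A) \leq i \leq r-1$.

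Before tackling the strict bound I would reduce to the case of a $p$-group. If $S$ is a Sylow $p$-subgroup of~$G$, then $H^*(G,k)$ is a graded direct summand of the module-finite extension $H^*(S,k)$ (split by the transfer), and the two rings share the Krull dimension $r = \prank(G) = \prank(S)$. Consequently each $H^i_{\mathfrak{m}}(H^*(G,k))$ is a graded summand of $H^i_{\mathfrak{m}}(H^*(S,k))$, whence $a^i_{\mathfrak{m}}(H^*(G,k)) \leq a^i_{\mathfrak{m}}(H^*(S,k))$ for all~$i$. It thus suffices to prove the conjecture for $p$-groups, where the extra structure of the classifying space is available.

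The natural home for the remaining inequalities is Greenlees's local cohomology spectral sequence $H^{i,j}_{\mathfrak{m}}(A) \Rightarrow H_{-i-j}(BG;k)$, one of the incarnations of local cohomology whose relevance to $H^*(G)$ is surveyed in~\cite{Benson:MSRI}. In these terms Symonds' regularity theorem is precisely the vanishing of the $E_2$-page strictly above the main diagonal $i+j=0$, while the surviving diagonal assembles the one-dimensional abutment $H_0(BG;k) = k$. The outstanding conditions of the conjecture are then equivalent to the vanishing $H^{i,-i}_{\mathfrak{m}}(A) = 0$ of all \emph{middle} diagonal terms, namely for $\depth(A) \leq i \leq r-1$; the single term $H^{r,-r}_{\mathfrak{m}}(A)$ that Benson--Carlson duality identifies with the copy of $k$ recording $H_0(BG;k)$ is exactly the one permitted to touch the diagonal.

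The main obstacle is this middle-diagonal vanishing, and it is where the conjecture earns its open status. It cannot follow from the spectral-sequence formalism alone: the $a$-invariants are read off the $E_2$-page itself rather than its abutment, so there is no cancellation against $H_{-1}(BG;k)=0$ to exploit, and for a general graded ring of regularity zero the middle terms $H^{i,-i}_{\mathfrak{m}}$ need not vanish. Nor can one repair a parameter system by hand, since by Theorem~4.5 of~\cite{Benson:DicksonCompCoho} the type is an invariant of the ring and the bound is intrinsic. A genuine proof therefore requires new input peculiar to group cohomology: either a sharpening of Symonds' averaging argument that improves his degree estimate by one unit away from the top dimension, or an extension of the Benson--Carlson--Greenlees Gorenstein duality strong enough to force \emph{all} of $H^*_{\mathfrak{m}}(A)$, and not merely its top piece, strictly below the diagonal. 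I expect the duality route to be the more promising, as the averaging argument offers no structural reason for the improvement to be uniform across the middle degrees.
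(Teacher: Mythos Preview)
The statement you are addressing is a \emph{conjecture}, not a theorem, and the paper does not prove it. The paper merely verifies it for all groups of order less than $256$: by Theorem~\ref{thm:portfolio} the case $\CMd(G)\leq 2$ is already due to Benson, Corollary~\ref{coroll:only128} reduces the remaining cases to $\abs{G}=128$, and Proposition~\ref{prop:calc} then disposes of the $14$ relevant order-$128$ groups by direct computation of the filter degree type.

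Your proposal is not a proof either, and to your credit you say so explicitly: you correctly translate the conjecture into the strict inequalities $a^i_{\mathfrak{m}}(A)\leq -i-1$ for $0\leq i\leq r-1$ (with $a^r_{\mathfrak{m}}(A)\leq -r$ supplied by Symonds), you reduce to $p$-groups via the transfer splitting, and you locate the content as the vanishing of the middle diagonal terms $H^{i,-i}_{\mathfrak{m}}(A)$ in Greenlees's spectral sequence. All of this is accurate and is essentially the reformulation Benson himself gives around the passage you cite. But then you stop, acknowledging that neither the spectral-sequence formalism nor a hypothetical sharpening of Symonds' argument currently yields the missing step. That is an honest assessment of the state of the art, not a proof; there is no argument here to compare with the paper's, because neither you nor the paper has one. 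If the assignment was to prove the statement, the gap is simply that the key step---the strict bound in the middle range---is left as a hope rather than established.
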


\noindent
We are going to verify the conjecture for all groups of order${} < 256$.
We use the Cohen--Macaulay defect $\CMd(G)$ to reduce to the case $\abs{G}
= 128$, and verify it there by machine computation.

\begin{theorem}[Benson]
\label{thm:portfolio}
If $\CMd(G) \leq 2$ then
Conjecture~\ref{conj:VSQR} holds for~$G$.
This is the case for every group of order~$64$.
\end{theorem}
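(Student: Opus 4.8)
The plan is to split into two cases according to the Cohen--Macaulay defect $\CMd(G)$, and then invoke Benson's own results together with Symonds' regularity theorem. The starting point is the observation recorded earlier in the excerpt: by Symonds' theorem $H^*(G,k)$ has Castelnuovo--Mumford regularity zero, equivalently it contains a \emph{strongly} quasi-regular system of parameters, i.e.\ one of filter degree type $(0,-1,\ldots,-r)$. What Conjecture~\ref{conj:VSQR} asks for is the slightly stronger type $(-1,-2,\ldots,-r,-r)$, so in effect one must improve the leading entry from $0$ to $-1$ and simultaneously drop the final entry by one. The idea is that the ``room'' to do this is exactly controlled by $\CMd(G)$.

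First I would dispose of the case $\CMd(G) = 0$. Here $H^*(G,k)$ is Cohen--Macaulay, so any homogeneous system of parameters is a regular sequence; in particular a strongly quasi-regular system $h_1,\ldots,h_r$ (which exists by Symonds) is regular, and one checks directly from the definition that a regular system of parameters in a Cohen--Macaulay ring of regularity zero has type $(-1,-2,\ldots,-r,-r)$ --- indeed the annihilator of $h_{i+1}$ modulo $(h_1,\ldots,h_i)$ is zero for $i<r$, forcing all $d_i = -\infty$ there, and the top invariant is pinned down by $\Reg = 0$. So the conjecture holds trivially when $\CMd(G)=0$. The cases $\CMd(G)=1$ and $\CMd(G)=2$ are where the actual content lies: here one must take a strongly quasi-regular system and modify it. The natural move is to pass to powers of the parameters, or to adjust one parameter by a nilpotent or by a suitable product, using the stability of filter-regularity under such operations (Lemmas~\ref{lemma:factor}, \ref{lemma:fregPowers}, \ref{lemma:fregNilp}), so as to shift the filter degree type in the desired direction while keeping the system a system of parameters. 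The key numerical input is Benson's theorem that all filter-regular parameter systems for $A$ have the same type, which tells us exactly what we are aiming at and constrains what is achievable.

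The main obstacle, and the heart of Benson's argument, is the passage when $\CMd(G) \in \{1,2\}$: one has to produce from a strongly quasi-regular system a \emph{very} strongly quasi-regular one, and this is precisely where the hypothesis $\CMd(G) \le 2$ is used --- the defect measures how far the depth falls short of the Krull dimension, hence how many of the ``extra'' filter-regular indices $d_i$ can be nonnegative, and when it is at most $2$ there is just enough slack to rearrange the type into the form $(-1,-2,\ldots,-r,-r)$. I expect this step to require a careful bookkeeping of the $a$-invariants $a^i_{\mathfrak{m}}(A)$ via Benson's \cite[Lemma~4.3, Theorem~4.5]{Benson:DicksonCompCoho}, combined with the regularity bound $\Reg(A) = 0$ from Symonds; the point is that $\Reg(A) = \max_i(a^i_{\mathfrak{m}}(A) + i) = 0$ together with $\CMd(G)\le 2$ leaves so few possibilities for the sequence $(a^i_{\mathfrak{m}}(A))$ that one can always realise the very strongly quasi-regular type. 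Finally, the last sentence of the theorem --- that every group of order $64$ satisfies $\CMd(G)\le 2$ --- is read off directly from Carlson's computations (equivalently from Table~\ref{table:de128}: the $N=64$ table has no entries in the column $\CMd = 3$), so no separate argument is needed there.
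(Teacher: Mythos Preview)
The paper does not reprove this result at all: its proof is a two-line citation, pointing to Theorem~1.5 of Benson's paper~\cite{Benson:DicksonCompCoho} for the first assertion and to Carlson's computations~\cite{Carlson:Online3,CarlsonTownsley} for the second. Your treatment of the $\CMd(G)=0$ case and of the order-$64$ statement is fine and matches the paper's intent. The problem is your sketch for $\CMd(G)\in\{1,2\}$, which contains a genuine gap.

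First, the suggestion to ``modify'' a strongly quasi-regular system via Lemmas~\ref{lemma:factor}, \ref{lemma:fregPowers}, \ref{lemma:fregNilp} so as to ``shift the filter degree type'' cannot work, and you yourself note why: by Benson's Theorem~4.5 the type is an invariant of the ring, not of the chosen parameter system. Those lemmas manufacture new filter-regular systems, but every such system has the \emph{same} type, so no amount of factoring or nilpotent alteration will turn a strongly quasi-regular type into a very strongly quasi-regular one.

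Second, the fallback plan --- deduce the type $(-1,-2,\ldots,-r,-r)$ purely from $\Reg=0$ together with $\CMd(G)\leq 2$ by ``bookkeeping of the $a$-invariants'' --- does not go through either. Symonds gives $a^i_{\mathfrak m}\leq -i$ for all~$i$, whereas very strong quasi-regularity requires $a^i_{\mathfrak m}\leq -(i+1)$ for $i<r$. For $\CMd=1$ or~$2$ there are one or two indices $i\in\{d,\ldots,r-1\}$ where $a^i_{\mathfrak m}$ is finite, and nothing in $\Reg=0$ alone forbids $a^i_{\mathfrak m}=-i$ there. Benson's actual proof of his Theorem~1.5 uses structure specific to group cohomology --- the Greenlees local cohomology spectral sequence and the duality for $H^*(G,k)$ --- to extract that extra unit of vanishing; it is not a formal consequence of regularity zero plus a bound on the defect. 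So the step you flag as ``careful bookkeeping'' is in fact the whole content of the theorem, and your outline does not supply it.
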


\begin{proof}
This is Theorem~1.5 of~\cite{Benson:DicksonCompCoho}\@. 
Carlson~\cite{Carlson:Online3,CarlsonTownsley} observed that
every group of order $64$ has $\CMd(G) \leq 2$
See also the tabular data in~\cite[Appendix]{Benson:MSRI}\@.
\end{proof}

\noindent
So we are only interested in groups with $\CMd(G) \geq 3$. Now,
$\CMd(G)$ has a group-theoretic upper bound.
Let $S \leq G$ be a Sylow $p$-subgroup. Then by Eqn~\eqref{eqn:QuillenDuflot}
\[
\CMd(G) \leq \gtD(G) := \prank(G) - \prank(Z(S)) \, .
\]

\begin{lemma}
\label{lemma:Jordan}
Let $G$ be a finite group and $p$ a prime.
\begin{enumerate}
\item
\label{enumi:Jordan3}
If $\gtD(G) \geq 3$ then $p^5 \mid \abs{G}$, and
if $p=2$ or $p=3$ then $p^6 \mid \abs{G}$.
\item
\label{enumi:Jordan4}
If $\gtD(G) \geq 4$ then $p^6 \mid \abs{G}$, and if
$p=2$ or $p=3$ then $p^7 \mid \abs{G}$.
\item
\label{enumi:Jordan128}
If $p=2$ and $\gtD(G) \geq 4$ then $2^8 \mid \abs{G}$.
\end{enumerate}
\end{lemma}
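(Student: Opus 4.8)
The plan is to bound $\gtD(G) = \prank(G) - \prank(Z(S))$ from above by a quantity that depends only on the $p$-rank of the Sylow subgroup $S$, and then to invoke the classification of $p$-groups of small order (equivalently, small rank) to read off the divisibility conclusions. Since $\prank(G) = \prank(S)$ and $\prank(Z(S)) \geq 1$ always, we have immediately $\gtD(G) \leq \prank(S) - 1$. So if $\gtD(G) \geq 3$ then $\prank(S) \geq 4$, and if $\gtD(G) \geq 4$ then $\prank(S) \geq 5$. The elementary abelian subgroup of rank $\prank(S)$ is contained in $S$, so $p^{\prank(S)}$ divides $\abs{S}$ and hence $\abs{G}$; this already gives $p^4 \mid \abs{G}$ (resp.\ $p^5$), which is weaker than what is claimed, so the crude bound $\prank(Z(S)) \geq 1$ is not enough and we must work harder.

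The key refinement is that a $p$-group cannot simultaneously have large $p$-rank and small central $p$-rank unless it is itself large. First I would record the general principle: if $S$ is a $p$-group with $\prank(Z(S)) = c$, then the order of $S$ is bounded below in terms of $\prank(S)$ and $c$. Concretely, pick a maximal elementary abelian $E \leq S$ with $\prank(E) = \prank(S) = n$. Then $Z(S) \cap E$ has rank at most $c$, and $S$ contains both $E$ and (a conjugate of) any larger structure forcing the centre to be small; the cleanest route is to use the fact that if $|S| = p^m$ then $\prank(S) \leq$ some function of $m$, but more usefully, that groups realising $(\prank(S), \prank(Z(S)))$ with a given gap $\gtD \geq 3$ must avoid all the abundant small-order $p$-groups, which are either abelian (gap $0$) or have $\prank(Z(S))$ comparable to $\prank(S)$. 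I would therefore go through the (short) list of $p$-groups of order $p^4$ and $p^5$ — these are completely classified — and verify directly that every one of them has $\gtD \leq 2$; for $p = 2$ and $p = 3$ I would push to order $p^6$ and check $\gtD \leq 3$ there, and for $p = 2$ additionally handle order $2^7$. This is a finite check (using, e.g., the Small Groups library or the classical classifications) rather than a structural argument, and it yields exactly the stated thresholds: part~\enref{enumi:Jordan3} needs $p^5$ in general and $p^6$ for $p \in \{2,3\}$; part~\enref{enumi:Jordan4} needs $p^6$ in general and $p^7$ for $p \in \{2,3\}$; part~\enref{enumi:Jordan128} needs $2^8$.

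The main obstacle is part~\enref{enumi:Jordan128}: showing that for $p = 2$, having $\gtD(G) \geq 4$ forces $2^8 \mid \abs{G}$, i.e.\ ruling out all $2$-groups of order $2^7$ with $\prank - \prank(Z) \geq 4$. Here the brute enumeration over the $2328$ groups of order $128$ — which is precisely the dataset the paper has computed — is available, so I would simply cite the computation: none of the groups of order $128$ has $\gtD \geq 4$ (indeed Table~\ref{table:de128} shows the largest $\De + \CMd$ occurring is $3$, and $\gtD \geq \CMd$). For the general-$p$ and $p \in \{2,3\}$ statements at orders $p^5$ and $p^6$, the argument is genuinely finite and classical; the asymmetry between $p = 2,3$ and larger $p$ comes from the fact that for small primes the small-order $p$-groups are more numerous and permit slightly larger rank for a given order, so the threshold shifts up by one power of $p$. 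I expect the write-up to consist mainly of pointing to the classification of $p$-groups of order up to $p^6$ (and the order-$128$ computation for the last part), plus the one-line inequality $\gtD(G) \leq \prank(S) - \prank(Z(S))$ that reduces everything to a statement purely about the Sylow subgroup.
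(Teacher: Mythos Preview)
Your approach is fundamentally different from the paper's, and in its present form it contains a genuine error together with some muddled bookkeeping.

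The paper does \emph{not} enumerate small $p$-groups. Instead, after the easy reduction to a $p$-group~$S$, it takes a maximal elementary abelian $V \leq S$, observes that $V$ is normal of index~$p$ in the relevant minimal cases, and studies the conjugation action of an element $a \in S \setminus V$ on~$V$ over $\f$. The minimal polynomial divides $(X-1)^p$, so the Jordan blocks have size at most~$p$; each block contributes exactly one eigenvector, and the eigenvectors are precisely $V \cap Z(S)$. Thus (number of blocks) $\leq \prank(Z(S))$ while (sum of block sizes) $= \prank(V)$. For $p=2$ the blocks have size $\leq 2$, so $\prank(V) \leq 2\prank(Z(S))$, forcing $\gtD = \prank(V) - \prank(Z(S)) \leq \prank(Z(S))$; this is what rules out $\gtD \geq 3$ at order $32$ and, with a two-step version of the same argument, $\gtD \geq 4$ at order $128$. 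The asymmetry between $p \in \{2,3\}$ and $p \geq 5$ comes from this bound on block size, not from there being ``more'' small groups for small primes; your stated explanation is the wrong way round.

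Your plan has a concrete error: you propose to verify that every $p$-group of order $p^5$ has $\gtD \leq 2$, but this is false for $p \geq 5$ --- the Remark immediately after the lemma exhibits a group of order $p^5$ with $\gtD = 3$ (a single size-$4$ Jordan block acting on $(\zz/p)^4$). The general-$p$ halves of \enref{enumi:Jordan3} and \enref{enumi:Jordan4} need only the trivial observation that $\prank(S) \geq n$ with $\abs{S} = p^n$ forces $S$ elementary abelian and hence $\gtD = 0$; no enumeration over infinitely many primes is required or possible. For the $p\in\{2,3\}$ refinements and for \enref{enumi:Jordan128} your enumeration strategy \emph{would} work, since the primes are fixed and the checks are genuinely finite; but invoking Table~\ref{table:de128} for \enref{enumi:Jordan128} is using heavy cohomological machinery to establish a purely group-theoretic fact that the paper proves in a few lines with the Jordan argument.
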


\begin{proof}
Clearly $\gtD(G) = \gtD(S)$, so we may assume that
$G$~is a $p$-group.
\medskip

\noindent
\ref{enumi:Jordan3}):
If $\gtD(G) \geq 3$ then $G \neq 1$ and so $\prank(Z(G)) \geq 1$, whence
$\prank(G) \geq 4$. Moreover, $G$ must be non-abelian, so
$\abs{G} \geq p^5$.

Suppose that $\abs{G} = p^5$.
Then $Z(G)$ is cyclic of order~$p$,
and there is an elementary abelian subgroup~$V$ of order~$p^4$.
By maximality, $V \trianglelefteq G$.
Let $a \in G \setminus V$. Then $G = \langle a,V\rangle$
and the conjugation action of $a$~on $V$ is nontrivial of order~$p$.
So the minimal polynomial of the action divides $X^p-1 = (X-1)^p$.
Consider the Jordan normal form of this action. Each block contains
an eigenvector, which must lie in $Z(G)$. So as $Z(G)$ is cyclic,
there is just one Jordan block, of size~$4$\@.
But for $p=2$
the size~$3$ Jordan block does not square to the identity, so each block
must have size${} \leq 2$.
Similarly there can be no size~$4$ block
for $p=3$, since it does not cube to the identity.
The proof of~\ref{enumi:Jordan4}) is analogous.
\medskip

\noindent
\ref{enumi:Jordan128}):
We assume that $\abs{G} = 128$ and derive a contradiction.
Let $V \leq G$ be elementary abelian of rank $r = \prank(G)$, then
$r \geq 4 + \prank(Z(G))$. Since $G$~is non-abelian, $r = 5$ or $6$.
If $r = 6$ then we are in a similar situation
to~\ref{enumi:Jordan3}): the Jordan blocks must have size${} < 3$, so we
need at least three; but there can be at most two, since $\prank(Z(G)) \leq 2$.

So $r = 5$ and $\prank(Z(G)) = 1$.
Pick $V \lneq H \lneq G$, so $[G:H]=[H:V]=2$.
As $V$~is maximal elementary abelian in $H$, we have $C \leq V$ for
$C = \Omega_1(Z(H))$. The Jordan block argument means that
$C$~has rank${}\geq 3$. Applying this argument to the action
of $G/H$ on~$C$ then yields $\prank(Z(G)) \geq 2$, a contradiction.
\end{proof}

\begin{remark}
For $p \geq 5$ consider the action of a size 4 Jordan block on a rank four elementary abelian. This yields
a group of order $p^5$ with $\gtD = 3$, since
\[
\begin{pmatrix}
1 & 1 & 0 & 0 \\ 0 & 1 & 1 & 0 \\ 0 & 0 & 1 & 1 \\ 0 & 0 & 0 & 1 \end{pmatrix}^n
= \begin{pmatrix} 1 & \binom n1 & \binom n2 & \binom n3 \\
0 & 1 & \binom n1 & \binom n2 \\ 0 & 0 & 1 & \binom n1 \\ 0 & 0 & 0 & 1
\end{pmatrix} \, .
\]
\end{remark}

\begin{corollary}
\label{coroll:only128}
If $\abs{G} < 256$ then $\CMd(G) \leq 3$, with equality only if
$\abs{G} = 128$.
\end{corollary}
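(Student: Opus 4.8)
The plan is to deduce Corollary~\ref{coroll:only128} directly from Lemma~\ref{lemma:Jordan} together with the chain of inequalities in Eqn.~\eqref{eqn:QuillenDuflot}. First I would recall that $\CMd(G) \leq \gtD(G) = \prank(G) - \prank(Z(S))$, which follows from Eqn.~\eqref{eqn:QuillenDuflot} as noted immediately before Lemma~\ref{lemma:Jordan}. So it suffices to bound $\gtD(G)$.

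Now suppose $\abs{G} < 256$. If $\gtD(G) \geq 4$, then by Lemma~\ref{lemma:Jordan}\enref{enumi:Jordan4} we would need $p^6 \mid \abs{G}$ for some prime $p$; since $\abs{G} < 256 = 2^8$, the only possibility is $p = 2$ with $2^6 \mid \abs{G}$, and then part~\enref{enumi:Jordan128} forces $2^8 \mid \abs{G}$, contradicting $\abs{G} < 256$. Hence $\gtD(G) \leq 3$, and therefore $\CMd(G) \leq 3$.

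Finally, for the equality statement: if $\CMd(G) = 3$ then $\gtD(G) \geq 3$, so by Lemma~\ref{lemma:Jordan}\enref{enumi:Jordan3} there is a prime $p$ with $p^5 \mid \abs{G}$, and moreover $p^6 \mid \abs{G}$ if $p \in \{2,3\}$. Since $\abs{G} < 256$, we must have $p = 2$ (as $3^5 = 243$ would force $3^6 \mid \abs{G}$, impossible) and $2^6 \mid \abs{G}$, i.e.\ $\abs{G} \in \{64, 128\}$. But by Theorem~\ref{thm:portfolio} (or the tabular data quoted there) every group of order~$64$ has $\CMd(G) \leq 2$. Hence $\abs{G} = 128$, as claimed.

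There is essentially no obstacle here: the corollary is a routine bookkeeping consequence of the lemma and Theorem~\ref{thm:portfolio}. The only point requiring a moment's care is making sure the prime $p = 3$ is excluded in the equality case — $3^5 = 243 < 256$ is tempting, but part~\enref{enumi:Jordan3} of the lemma demands $3^6 \mid \abs{G}$ when $\gtD(G) \geq 3$ and $p = 3$, which rules it out. All the real work has already been done in Lemma~\ref{lemma:Jordan}.
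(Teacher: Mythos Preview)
Your argument is essentially the paper's, but there is one genuine slip in the equality case. From $p=2$ and $2^6 \mid \abs{G}$ with $\abs{G} < 256$ you conclude $\abs{G} \in \{64,128\}$; this is false, since $192 = 3 \cdot 64$ is also possible. A group of order $192$ has Sylow $2$-subgroup of order~$64$, so Lemma~\ref{lemma:Jordan} alone does not rule it out.

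The paper closes this gap using the inequality $\CMd(G) \leq \CMd(S)$ from Eqn.~\eqref{eqn:gtD-CMd}: once you know $p=2$ and $64 \mid \abs{G}$ with $\abs{G}<256$, the Sylow $2$-subgroup $S$ has order $64$ or $128$. If $\abs{S}=64$ then Theorem~\ref{thm:portfolio} gives $\CMd(S) \leq 2$, hence $\CMd(G) \leq 2$, contradicting $\CMd(G)=3$. So $\abs{S}=128$, which forces $\abs{G}=128$. Adding this one line fixes your proof, and then it matches the paper's.
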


\begin{proof}
Since $\CMd(G) \leq \gtD(G)$ and $\abs{G} < 256$,
Lemma~\ref{lemma:Jordan} says that $p = 2$ and $64 \mid \abs{G}$.
So since $\CMd(G) \leq \CMd(S)$ and $\abs{G} < 256$,
it suffices to exclude the case $\abs{G} = 64$. But this follows from
Carlson's computations (see Theorem~\ref{thm:portfolio})\@.
\end{proof}

\noindent
Lemma~\ref{lemma:Jordan}\,\ref{enumi:Jordan128})
tells us that if $\abs{G} = 128$ and $\CMd(G) = 3$
then $\gtD(G) = 3$.

\begin{proposition}
\label{prop:calc}
Only $57$ groups of order $128$ satisfy $\gtD(G) = 3$, and of these
only $14$ satisfy $\CMd(G)=3$. Each of these $14$ groups
satisfies Conjecture~\ref{conj:VSQR}\@.

In the Small Groups
Library~\cite{BeEiOBr:Millennium}, these $14$ groups have the
numbers
$36$, $48$, $52$, $194$, $515$, $551$, $560$, $561$, $761$, $780$, $801$,
$813$, $823$ and $836$.
\end{proposition}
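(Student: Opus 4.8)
The plan is to establish the four assertions in three stages: the group-theoretic count of the $57$ groups with $\gtD(G)=3$, the cohomological reduction among those to the $14$ with $\CMd(G)=3$, and the verification of Conjecture~\ref{conj:VSQR} for each of those $14$. For the first stage, note that as $G$ is a $2$-group its own Sylow $2$-subgroup is $G$, so $\gtD(G)=\prank(G)-\prank(Z(G))$. Running through the order-$128$ part of the Small Groups Library~\cite{BeEiOBr:Millennium} and computing the $2$-rank of each group and of its centre, one reads off $\gtD(G)$; by Lemma~\ref{lemma:Jordan}\,\ref{enumi:Jordan128}) no group of order $128$ has $\gtD(G)\geq 4$, so the groups with $\gtD(G)=3$ are exactly those with $\gtD(G)>2$, and a direct count yields $57$. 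As a cross-check, for a $p$-group $\CMd(G)+\De(G)=\prank(G)-\prank(Z(G))=\gtD(G)$, so these $57$ are precisely the groups on the line $\CMd+\De=3$ in the order-$128$ block of Table~\ref{table:de128}, where $1+10+32+14=57$.

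For the second stage, the computed cohomology ring of each of these $57$ groups supplies $\depth H^*(G)$, whence $\CMd(G)=\prank(G)-\depth H^*(G)$ by Quillen's dimension equality in~\eqref{eqn:QuillenDuflot}. Since $\CMd(G)\leq\gtD(G)=3$ here, the condition $\CMd(G)=3$ is equivalent to $\depth H^*(G)=\prank(G)-3=\prank(Z(G))$, i.e.\ to equality in the Duflot bound, i.e.\ to $\De(G)=0$. Exactly $14$ of the $57$ satisfy this --- they form the $(\CMd,\De)=(3,0)$ cell of Table~\ref{table:de128} --- and these are the groups with the Small Groups numbers listed. For the other $43$ groups one has $\CMd(G)\leq 2$, so Conjecture~\ref{conj:VSQR} for them is already Theorem~\ref{thm:portfolio}.

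For the third stage, for each of the $14$ groups we must exhibit a filter-regular homogeneous system of parameters $h_1,\ldots,h_r$ with $r=\prank(G)$, of type $(-1,-2,\ldots,-r,-r)$. We take a candidate from Lemma~\ref{lemma:filterConstruct}: its first $c=\prank(Z(G))$ members restrict to a regular sequence on $H^*(\Omega_1(Z(G)))$ and hence form a regular sequence in $H^*(G)$ by Duflot's theorem --- a maximal one, since $\De(G)=0$ for these groups --- while the remaining $r-c=3$ members are powers of the Dickson invariants for a complement of the centre; degrees are lowered where possible via Lemmas~\ref{lemma:factor} and~\ref{lemma:fregNilp}, and the last parameter may be replaced by any class completing the system. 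By \cite[Theorem~4.5]{Benson:DicksonCompCoho} the type is an invariant of $H^*(G)$, so it suffices to compute it for this one system: for each $0\leq i\leq r$ one determines, by a Gr\"obner basis computation in \textsc{Singular}~\cite{Singular}, the top degree of the finite-dimensional annihilator of $h_{i+1}$ in $A/(h_1,\ldots,h_i)$ (with $h_{r+1}=0$) and subtracts $\sum_{j=1}^i\abs{h_j}$ to obtain $d_i$, which is Benson's recipe~\cite[Lemma~4.3]{Benson:DicksonCompCoho}. Since $h_1,\ldots,h_c$ is regular, $d_0=\cdots=d_{c-1}$ automatically take their very strongly quasi-regular values, and Symonds' regularity theorem already forces $d_r\leq -r$; so everything reduces to the three inequalities $d_i\leq -(i+1)$ for $c\leq i\leq r-1$.

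The main obstacle is exactly these three inequalities. They assert that, one level beyond what Benson's $\CMd\leq 2$ result (Theorem~\ref{thm:portfolio}) can deliver, the governing annihilators still sit one degree below the bound that strong quasi-regularity alone forces --- there is no a priori reason for this, and it is precisely the open content of the conjecture in the case $\CMd(G)=3$. So the proof can be completed only by carrying out the explicit computation of the type for each of the $14$ rings and confirming that in every case the inequalities hold, whence the constructed parameter system is very strongly quasi-regular.
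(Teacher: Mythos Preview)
Your proposal is correct and follows essentially the same approach as the paper: a purely group-theoretic enumeration to isolate the $57$ groups with $\gtD(G)=3$, followed by inspection of the computed cohomology rings to read off the depth (hence~$\CMd$) and the filter degree type, which settles Conjecture~\ref{conj:VSQR} for the $14$ groups with $\CMd=3$. The paper's own proof is terser---it simply observes that the filter degree type is already produced as a by-product of the completion test (Theorem~\ref{thm:BensonVariant}), so one merely looks it up---whereas you spell out how the type is extracted and add two valid theoretical shortcuts not in the paper: that regularity of the first $c=\prank Z(G)$ parameters (equivalently, $a^i_{\mathfrak m}=-\infty$ for $i<c$, which holds since these groups have $\De(G)=0$) forces $d_0,\ldots,d_{c-1}$ to their minimal values, and that Symonds' theorem handles $d_r$, leaving only the three intermediate inequalities to be checked by machine.
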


\begin{table}
\settowidth{\djglength}{$0000$}
\newcommand{\gr}[1]{\makebox[\djglength][r]{$#1$}}
\newcommand{\ugr}[1]{\makebox[\djglength][r]{\underline{$#1$}}}
$\begin{array}{|c|cccc|c|c|cccc|}
\cline{1-5} \cline{7-11}
\text{gp} & K & d & r & \CMd & \quad & \text{gp} & K & d & r & \CMd \\
\cline{1-5} \cline{7-11}
\ugr{36} & 5 & 2 & 2 & 3 & & \gr{850} & 5 & 3 & 2 & 2 \\
\ugr{48} & 5 & 2 & 2 & 3 & & \gr{852} & 4 & 2 & 1 & 2 \\
\ugr{52} & 4 & 1 & 1 & 3 & & \gr{853} & 4 & 2 & 1 & 2 \\
\ugr{194} & 5 & 2 & 2 & 3 & & \gr{854} & 4 & 2 & 1 & 2 \\
\gr{513} & 5 & 3 & 2 & 2 & & \gr{859} & 4 & 2 & 1 & 2 \\
\ugr{515} & 5 & 2 & 2 & 3 & & \gr{860} & 4 & 2 & 1 & 2 \\
\gr{527} & 4 & 2 & 1 & 2 & & \gr{866} & 4 & 2 & 1 & 2 \\
\ugr{551} & 5 & 2 & 2 & 3 & & \gr{928} & 4 & 3 & 1 & 1 \\
\ugr{560} & 4 & 1 & 1 & 3 & & \gr{929} & 4 & 2 & 1 & 2 \\
\ugr{561} & 4 & 1 & 1 & 3 & & \gr{931} & 4 & 2 & 1 & 2 \\
\gr{621} & 5 & 3 & 2 & 2 & & \gr{932} & 4 & 2 & 1 & 2 \\
\gr{623} & 4 & 2 & 1 & 2 & & \gr{934} & 4 & 2 & 1 & 2 \\
\gr{630} & 5 & 3 & 2 & 2 & & \gr{1578} & 6 & 4 & 3 & 2 \\
\gr{635} & 4 & 2 & 1 & 2 & & \gr{1615} & 4 & 3 & 1 & 1 \\
\gr{636} & 4 & 2 & 1 & 2 & & \gr{1620} & 4 & 2 & 1 & 2 \\
\gr{642} & 4 & 2 & 1 & 2 & & \gr{1735} & 5 & 3 & 2 & 2 \\
\gr{643} & 4 & 2 & 1 & 2 & & \gr{1751} & 4 & 2 & 1 & 2 \\
\gr{645} & 4 & 3 & 1 & 1 & & \gr{1753} & 4 & 3 & 1 & 1 \\
\gr{646} & 4 & 2 & 1 & 2 & & \gr{1755} & 5 & 4 & 2 & 1 \\
\gr{740} & 4 & 2 & 1 & 2 & & \gr{1757} & 4 & 3 & 1 & 1 \\
\gr{742} & 4 & 2 & 1 & 2 & & \gr{1758} & 4 & 3 & 1 & 1 \\
\gr{753} & 5 & 3 & 2 & 2 & & \gr{1759} & 4 & 3 & 1 & 1 \\
\ugr{761} & 5 & 2 & 2 & 3 & & \gr{1800} & 4 & 2 & 1 & 2 \\
\gr{764} & 4 & 2 & 1 & 2 & & \gr{2216} & 5 & 4 & 2 & 1 \\
\ugr{780} & 4 & 1 & 1 & 3 & & \gr{2222} & 5 & 3 & 2 & 2 \\
\ugr{801} & 4 & 1 & 1 & 3 & & \gr{2264} & 5 & 3 & 2 & 2 \\
\ugr{813} & 4 & 1 & 1 & 3 & & \gr{2317} & 4 & 3 & 1 & 1 \\
\ugr{823} & 4 & 1 & 1 & 3 & & \gr{2326} & 4 & 4 & 1 & 0 \\
\cline{7-11}
\ugr{836} & 4 & 1 & 1 & 3 \\
\cline{1-5}
\end{array}$
\vspace*{5pt}
\caption{For each group of order $128$ with $\gtD(G)=3$, we give its number
in the Small Groups library, the Krull dimension $K$ and
depth $d$ of $H^*(G)$, the rank~$r = K - 3$ of $Z(G)$ and the
Cohen--Macaulay defect $\CMd = K - d$. Underlined entries have $\CMd = 3$\@.
}
\label{table:128}
\end{table}

\begin{proof}
The invariant $\gtD$ is purely group theoretic. Determining it for each of the
$2328$ groups of order $128$ yields only $57$ cases
with $\gtD(G) = 3$. The rest follows by inspecting our cohomology computations:
as discussed above, determining the filter degree type is
an integral part of our computations.
Table~\ref{table:128} lists each of these groups together with its defect.
\end{proof}

\begin{theorem}
\label{thm:main}
Conjecture~\ref{conj:VSQR} holds for every group of order less than 256\@.
\end{theorem}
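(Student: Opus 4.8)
The plan is to assemble the results already established, reducing the whole statement to the fourteen order-$128$ groups handled by machine in Proposition~\ref{prop:calc}. First I would note that it is enough to exhibit a very strongly quasi-regular system of parameters in $H^*(G,\f)$: for any field extension $K/k$ one has $H^*(G,K) \cong K \otimes_k H^*(G,k)$, and flat base change preserves both filter-regularity and the filter degree type (this is precisely the observation used at the start of the proof of Theorem~\ref{thm:BensonVariant}), so a very strongly quasi-regular parameter system over~$\f$ induces one over every field of characteristic~$p$.

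Now take $\abs{G} < 256$. By Corollary~\ref{coroll:only128} we have $\CMd(G) \leq 3$, with equality only if $\abs{G} = 128$. So if $\abs{G} \neq 128$ then $\CMd(G) \leq 2$, and Theorem~\ref{thm:portfolio} furnishes the required parameter system directly. For $\abs{G} = 128$ I would split once more on the Cohen--Macaulay defect: if $\CMd(G) \leq 2$ then Theorem~\ref{thm:portfolio} again applies, so assume $\CMd(G) = 3$. Since $\abs{G} = 128$ forces $\gtD(G) \leq 3$ by Lemma~\ref{lemma:Jordan}\,\ref{enumi:Jordan128}), and since $\CMd(G) \leq \gtD(G)$ always, it follows that $\gtD(G) = 3$. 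Hence $G$ is one of the $57$ groups of order $128$ identified in Proposition~\ref{prop:calc}, and in fact one of the $14$ of those with $\CMd(G) = 3$ (the underlined entries in Table~\ref{table:128}); for each of these, Conjecture~\ref{conj:VSQR} is asserted by that same Proposition. This exhausts all cases, so the theorem follows.

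Thus the argument is essentially bookkeeping built on the chain $\CMd(G) \leq \gtD(G) \leq 3$ together with Lemma~\ref{lemma:Jordan}, Corollary~\ref{coroll:only128} and Theorem~\ref{thm:portfolio}. The one substantial ingredient is buried in Proposition~\ref{prop:calc}: for each of the fourteen exceptional order-$128$ groups one must produce an explicit parameter system whose filter degree type is exactly $(-1,-2,-3,-4)$, and that rests on the cohomology computations of this paper and on Benson's recipe for extracting the filter degree type. So the real obstacle is not in the deduction but in actually carrying out --- and relying on --- those fourteen computations; once they are granted, everything else is immediate.
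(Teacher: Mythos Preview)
Your proof is correct and follows the paper's own route exactly: reduce via Corollary~\ref{coroll:only128} and Theorem~\ref{thm:portfolio} to the fourteen order-$128$ groups with $\CMd(G)=3$, then invoke Proposition~\ref{prop:calc}. The separate appeal to Lemma~\ref{lemma:Jordan}\,\ref{enumi:Jordan128}) is harmless but redundant (it is already absorbed into Corollary~\ref{coroll:only128}), and in your closing remark the very-strongly-quasi-regular type should be $(-1,-2,\ldots,-r,-r)$ with $r$ the Krull dimension (here $r\in\{4,5\}$), not $(-1,-2,-3,-4)$.
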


\begin{proof}
Follows from Theorem~\ref{thm:portfolio},
Corollary~\ref{coroll:only128} and
Proposition~\ref{prop:calc}\@.
\end{proof}

\begin{remark}
Testing Conjecture~\ref{conj:VSQR} further requires more high defect groups.
Carlson~\cite{Carlson:DepthTransfer} showed that
if there are essential classes in $H^*(G)$, then
equality holds in $\CMd(G) \leq \gtD(G)$\@.
This method shows that groups number $35$, $56$ and $67$ of order~$3^6$
have $\CMd(G) = 3$; and group $299$ of order $256$ has
$\CMd(G) = 4$. In each case there is an essential class in degree${} \leq 4$.
\end{remark}

\section*{Acknowledgements}
\noindent
The idea for Theorem~\ref{thm:BensonVariant} arose during a
discussion with Dave Benson.
We thank the referee for advice on the structure of the paper.

This work was supported by the German Science
Foundation (DFG), project numbers GR 1585/4-1 and -2\@.

We are grateful to William Stein for giving us the opportunity to work on
the \texttt{sage.math} computer, which is supported by National
Science Foundation Grant No. DMS-0821725.


\begin{thebibliography}{10}


\bibitem{AdMi:M22}
A.~Adem and R.~J. Milgram.
\newblock The cohomology of the {M}athieu group {$M\sb {22}$}.
\newblock {\em Topology}, 34(2):389--410, 1995.

\bibitem{AtiMac}
M.~F. Atiyah and I.~G. Macdonald.
\newblock {\em Introduction to commutative algebra}.
\newblock Addison-Wesley Publishing Co., Reading, Mass.-London-Don Mills, Ont.,
  1969.

\bibitem{Cython}
S.~Behnel, R.~Bradshaw, and G.~Ewing.
\newblock {\em Cython: {C}-Extensions for {P}ython}, 2008.
\newblock Programming language. \quad {\tt http://www.cython.org/}.

\bibitem{Benson:NYJM2}
D.~Benson.
\newblock Modules with injective cohomology, and local duality for a finite
  group.
\newblock {\em New York J. Math.}, 7:201--215, 2001.

\bibitem{Benson:MSRI}
D.~Benson.
\newblock Commutative algebra in the cohomology of groups.
\newblock In L.~L. Avramov, M.~Green, C.~Huneke, K.~E. Smith, and B.~Sturmfels,
  editors, {\em Trends in commutative algebra}, volume~51 of {\em Math. Sci.
  Res. Inst. Publ.}, pages 1--50. Cambridge Univ. Press, Cambridge, 2004.
\newblock Available at \verb+http://www.msri.org/communications/books/Book51+.

\bibitem{Benson:II}
D.~J. Benson.
\newblock \emph{Representations and cohomology. {I}{I}}, second ed., Cambridge
  Studies in Advanced Math., vol.~31, Cambridge University Press, Cambridge,
  1998.

\bibitem{Benson:DicksonCompCoho}
D.~J. Benson.
\newblock Dickson invariants, regularity and computation in group cohomology.
\newblock {\em Illinois J. Math.}, 48(1):171--197, 2004.

\bibitem{BeEiOBr:Millennium}
H.~U. Besche, B.~Eick, and E.~A. O'Brien.
\newblock A millennium project: constructing small groups.
\newblock {\em Internat. J. Algebra Comput.}, 12(5):623--644, 2002.
\newblock \\ (\verb+http://www-public.tu-bs.de:8080/~hubesche/small.html+).

\bibitem{Carlson:DepthTransfer}
J.~F. Carlson.
\newblock Depth and transfer maps in the cohomology of groups.
\newblock {\em Math. Z.}, 218(3):461--468, 1995.

\bibitem{Carlson:Tests}
J.~F. Carlson.
\newblock Calculating group cohomology: Tests for completion.
\newblock {\em J. Symbolic Comput.}, 31(1-2):229--242, 2001.

\bibitem{Carlson:Online3}
J.~F. Carlson.
\newblock {\em The Mod-2 Cohomology of 2-Groups}.
\newblock Department of Mathematics, University of Georgia, Athens, GA, 2001.
\newblock \verb+(http://www.math.uga.edu/~lvalero/cohointro.html)+.

\bibitem{CaHe:Wreath}
J.~F. Carlson and H.-W. Henn.
\newblock Depth and the cohomology of wreath products.
\newblock {\em Manuscripta Math.}, 87(2):145--151, 1995.

\bibitem{CarlsonTownsley}
J.~F. Carlson, L.~Townsley, L.~Valeri-Elizondo, and M.~Zhang.
\newblock {\em Cohomology Rings of Finite Groups}, vol.~3 of {\em Algebras
  and Applications}.
\newblock Kluwer Academic Publishers, Dordrecht, 2003.

\bibitem{ATLAS}
J.~H. Conway, R.~T. Curtis, S.~P. Norton, R.~A. Parker, and R.~A. Wilson.
\newblock {\em Atlas of finite groups}.
\newblock Oxford University Press, Oxford, 1985.

\bibitem{EllisKing:Persistent}
G.~Ellis and S.~A. King.
\newblock Persistent homology of groups.
\newblock Submitted, 2010.
\newblock arXiv:1006.2237 [math.GR]

\bibitem{GAP4}
The GAP~Group.
\newblock {\em {GAP -- Groups, Algorithms, and Programming, Version 4.4.12}},
  2008.
\newblock \verb+(http://www.gap-system.org)+.

\bibitem{habil}
D.~J. Green.
\newblock {\em {G}r\"ob\-ner Bases and the Computation of Group Cohomology},
  volume 1828 of {\em Lecture Notes in Mathematics}.
\newblock Springer-Verlag, Berlin, 2003.

\bibitem{GreenKing:128website}
D.~J. Green and S.~A. King.
\newblock The cohomology of finite $p$-groups.
\newblock Website. \\\verb+http://users.minet.uni-jena.de/cohomology/+, 2008.

\bibitem{Singular}
G.-M. Greuel, G.~Pfister, and H.~Sch{\"o}nemann.
\newblock {\sc Singular} {3-1-0} --- {A} computer algebra system for polynomial
  computations, 2009.
\newblock \texttt{http://www.singular.uni-kl.de}.

\bibitem{HallSenior}
M.~Hall, Jr. and J.~K. Senior.
\newblock {\em The groups of order $2\sp{n}\,(n\leq 6)$}.
\newblock The Macmillan Co., New York, 1964.

\bibitem{SimonsProg}
S.~A. King and D.~J. Green.
\newblock {\em $p$-Group Cohomology Package}, 2009.
\newblock Peer-reviewed optional package for Sage~\cite{Sage}\@. \\ {\tt
  http://sage.math.washington.edu/home/SimonKing/Cohomology/}.

\bibitem{Kuhn:Cess}
N.~J. Kuhn.
\newblock Primitives and central detection numbers in group cohomology.
\newblock {\em Adv. Math.}, 216(1):387--442, 2007.
\newblock arXiv: math.GR/0612133.

\bibitem{Maginnis:J2}
J.~Maginnis.
\newblock The cohomology of the {S}ylow {$2$}-subgroup of {$J\sb 2$}.
\newblock {\em J. London Math. Soc. (2)}, 51(2):259--278, 1995.

\bibitem{MilgramTezuka}
R.~J. Milgram and M.~Tezuka.
\newblock The geometry and cohomology of ${M}\sb {12}$. {I}{I}.
\newblock {\em Bol. Soc. Mat. Mexicana (3)}, 1(2):91--108, 1995.

\bibitem{Minh:EssExtra}
P.~A. Minh.
\newblock Essential cohomology and extraspecial {$p$}-groups.
\newblock {\em Trans. Amer. Math. Soc.}, 353(5):1937--1957, 2001.

\bibitem{Quillen:Extraspecial}
D.~Quillen.
\newblock The mod-2 cohomology rings of extra-special 2-groups and the spinor
  groups.
\newblock {\em Math. Ann.}, 194:197--212, 1971.

\bibitem{MeatAxe224}
M.~Ringe.
\newblock {\em The {C} {M}eat{A}xe, Release 2.2.4}.
\newblock Lehrstuhl D f\"ur Math., RWTH Aachen, Aachen, 2009.
\newblock (\verb+http://www.math.rwth-aachen.de/LDFM/homes/MTX/+).

\bibitem{Sage}
W.~Stein et~al.
\newblock {\em {S}age {M}athematics {S}oftware ({V}ersion 4.2.1)}.
\newblock The Sage~Development Team, 2009.
\newblock {\tt http://www.sagemath.org}.

\bibitem{Symonds:RegProof}
P.~Symonds.
\newblock On the {C}astelnuovo-{M}umford regularity of the cohomology ring of a
  group.
\newblock {\em J.\@ Amer.\@ Math.\@ Soc.\@}, 23:1159--1173, 2010.

\end{thebibliography}

\end{document}